\theoremstyle{thmstyleone}%
\newtheorem{theorem}{Theorem}
\theoremstyle{thmstyletwo}%
\newtheorem{remark}{Remark}%
\theoremstyle{thmstylethree}%
\newtheorem{corollary}{Corollary}%
\begin{document}

\title[On the  telegraph process driven by geometric counting process with Poisson-based resetting ]{On the  telegraph process driven by geometric counting process with Poisson-based resetting }


\author[1]{\fnm{Antonio} \sur{Di Crescenzo}}\email{adicrescenzo@unisa.it}
\equalcont{These authors contributed equally to this work.}

\author*[2]{\fnm{Antonella} \sur{Iuliano}}\email{antonella.iuliano@unibas.it}
\equalcont{These authors contributed equally to this work.}

\author[1]{\fnm{Verdiana} \sur{Mustaro}}\email{vmustaro@unisa.it}
\equalcont{These authors contributed equally to this work.}

\author[2]{\fnm{Gabriella} \sur{Verasani}}\email{gabriella.verasani@unibas.it}
\equalcont{These authors contributed equally to this work.}

\affil[1]{\orgdiv{Department of Mathematics}, \orgname{University of Salerno}, \orgaddress{\street{Via Giovanni Paolo II, 132}, \city{Fisciano (SA)}, \postcode{84084}, \country{Italy}}} 

\affil[2]{\orgdiv{Department of Mathematics, Computer Science, and Economics}, \orgname{University of Basilicata}, \orgaddress{\street{Viale dell'Ateneo Lucano, 10}, \city{Potenza (PZ)}, \postcode{85100}, \country{Italy}}} 


\abstract{
We investigate the effects of the resetting mechanism to the origin for a  random motion on the real line characterized by two alternating velocities $v_1$ and $v_2$. We assume that the sequences of random times concerning the motions along each velocity follow two independent geometric counting processes of intensity $\lambda$, and that the resetting times are Poissonian with rate $\xi >0$. 
Under these assumptions we obtain the probability laws of the modified telegraph process 
describing the position and the velocity of the running particle. 
Our approach is based on the Markov property of the resetting times and on the knowledge of the distribution of the intertimes between consecutive velocity changes. 
We obtain also the asymptotic distribution of the particle position when (i) $\lambda$ tends to infinity, and (ii) the time goes to infinity. In the latter case the asymptotic distribution arises properly as an effect of the resetting mechanism. A quite different behavior is observed in the two cases when $v_2<0<v_1$ and $0<v_2<v_1$. Furthermore, we focus on the determination of the 
moment-generating function and on the main moments of the process describing the particle position under reset. Finally, we analyse the mean-square distance between the process subject to resets and the same process in absence of resets. Quite surprisingly, the lowest mean-square distance can be found for $\xi =0$,  for a positive $\xi$, or for $\xi\to +\infty$ depending on the choice of the other parameters. 
}

\keywords{Alternating process, Counting process, Poisson resetting, Stationary distribution, Telegraph process.}

\pacs[MSC Classification]{60K99, 60K50}
\pacs[ORCID]{Antonio Di Crescenzo: Orcid 0000-0003-4751-7341,\\ 
Antonella Iuliano: Orcid 0000-0001-8541-8120, \\
Verdiana Mustaro: Orcid 0000-0003-4583-2612, \\
Gabriella Verasani: Orcid 0009-0000-4994-1694}


\maketitle

\section{Introduction and background}\label{sec:1}

Stochastic processes under the mechanism of resetting have attracted a growing interest in the last few years. In some instances, such processes can be represented by a diffusing particle which, at random times, is reset to a given position, which usually coincides with the starting point. Generally, the random dynamics of the process and the resetting mechanism are taken to be independent of each other, while resettings to a given position occur according to a Poisson process or a general counting process. 
\par
In this paper we deal 
with a random walker having finite speed of propagation, so that the underlying process belongs to the family  of modified telegrapher's processes instead of the ordinary diffusion processes. 
Specifically, the considered diffusive model is a suitable modification of the so-called (integrated) telegraph process. This is a continuous-time process that describes a motion on the real line,  in which the changes of directions of the two alternating velocities are governed by the Poisson process (see, for instance, Beghin et al.\ \cite{beghin2001probabilistic}). Then, many generalizations of the classical model in $\mathbb{R}$ have been proposed in the years. See for instance, Crimaldi et al.\ \cite{crimaldi2013generalized}, where any new velocity is determined by the outcome of a random trial. 
See, also, Cinque and Orsingher \cite{cinque2021exact} for the analysis of the distribution of the maximum of the asymmetric telegraph process,  
Cinque \cite{cinque2022cond} for the determination of certain related conditional distributions, and Ricciuti and Toaldo \cite{ricciuti2023semi} for a class of abstract telegraph processes generalized to the case of semi-Markov perturbations. 
\par
The wide interest on these kind of process is motivated by possible applications in physics within the area of run-and-tumble particles. Other applied fields in which such processes deserve interest include mathematical finance, geophysics, biomathematics, genetics. Detailed descriptions of motivations and applications in such areas are given in the introductions of Evans et al.\ \cite{evans2020stochastic} and Di Crescenzo et al.\ \cite{di2023some}. Moreover, in our analysis we will focus on the modified telegraph process considered in the latter paper, in which the changes of velocities are governed by the Geometric Counting Process (GCP). The choice of this underlying counting process is motivated by the need of describing the intertimes between  velocity changes through heavy tailed distributions. In fact,  the modified Pareto distribution
related to the GCP is devisable in many real phenomena where the memoryless property of the exponential intertimes of the customary Poisson process is not met. We refer, for instance, to Cha and Finkelstein \cite{cha2013note}, and Di Crescenzo and Pellerey \cite{di2019some}), for a   description of useful properties and applications of the GCP. 
\par 
Aiming to study the modified telegraph process with (i) changes of velocities governed by the GCP, and (ii)  resets  occurring through an independent Poisson process, 
hereafter we recall some useful recent contributions in the area of 
modified telegraph processes.
\par 
The literature in this field has increased rapidly in the last two or three decades. By limiting to mention the more recent investigations, we recall  the articles oriented to the telegraph process with time-dependent coefficients 
(cf.\ Angelani and Garra \cite{angelani2018probability}),  the analysis of the Wasserstein distance between Brownian motion and the telegraph process (cf.\ Barrera and Lukkarinen \cite{barrera2023quantitative}),  the telegraph process evolving  on a circle (cf.\ 
De Gregorio and Iafrate \cite{de2021telegraph}) and on an hyperbola 
(cf.\ Pogorui and  Rodr\'{\i}guez-Dagnino \cite{pogorui2023Hyperbola}). We also mention  
certain extensions oriented to the multivariate setting concerning random flights (cf.\ De Gregorio and Orsingher \cite{de2020random}), suitable fractional telegraph processes (cf.\ D'Ovidio and Polito \cite{d2018fractional}),  
the telegraph processes with jumps governed by the 
fractional alternating Poisson process (cf.\ 
Di Crescenzo and Meoli \cite{DiCrescMeoli2018}), 
the analysis of linear combinations of independent telegraph 
processes  (cf.\ Kolesnik \cite{kolesnik2018linear}), the generalized telegraph process with gamma-distributed intertimes between velocity changes 
(cf.\ Martinucci et al. \cite{martinucci2022some}).
Furthermore, some studies have been oriented to 
the telegraph process in the 2- and 3-dimensional Euclidean space with orthogonal and cyclic directions (cf.\ Orsingher et al.\ \cite{orsingher2019cyclic}), telegraph processes of the Ornstein-Uhlenbeck type (cf.\ Pogorui and Rodríguez-Dagnino \cite{pogorui2022stationary} and  
Ratanov \cite{ratanov2021ornstein}) and first-crossing-time problems for jump-telegraph processes (cf.\ Ratanov \cite{ratanov2020first}). 

\subsection{Background on reset processes}

The analysis of reset/catastrophe processes is becoming more and more appealing,   since they play a relevant role in various applied contexts, such as queueing, 
population dynamics, stochastic modeling and physics.
In this framework, birth-death processes and queueing systems subject to catastrophes have been studied in 
Di Crescenzo et al.\ \cite{di2008note}, Boudali and Economou \cite{boudali2013effect}, Dimou and Economou, \cite{dimou2013single} and Giorno et al.\ \cite{giorno2014some}, among others. 
Moreover, in Economou and Gómez-Corral, \cite{economou2007batch}, the authors study the influence of renewal generated geometric catastrophes on a population of individuals that grows stochastically according to a batch Markovian arrival process (BMAP). Another interesting application concerning the extinction of populations is investigated in Artalejo et al.\ \cite{artalejo2007evaluating} where the basic immigration process is subject to binomial and geometric catastrophes. 
Moreover, in Dharmaraja et al.\  \cite{dharmaraja2015continuous}, 
a suitable scaling limit is performed on the continuous-time Ehrenfest model with catastrophes that leads to a suitable jump-diffusion process of the Ornstein-Uhlenbeck type. 
\par
In this research area a first result related specifically to the telegraph process is given in Section 6 of Orsingher \cite{Orsingher1990}, where the author discusses a kind of annihilated process and finds certain nice connections with the Kirchoff's equations. The interpretation of the annihilation as a renewal thus leads to 
the study of telegraph processes subject to resets and restarts. Along this line,
recent results in physics are given in Masoliver \cite{masoliver2019telegraphic} and in Radice \cite{radice2021one}, where the authors study the effects of resetting mechanisms on random processes that follow the telegrapher's equation. Furthermore, an appealing discussion is present in Evans et al.\  \cite{evans2018run} where the effect of resetting on run and tumble dynamics using a renewal equation approach is investigated. 
\par
Other investigations have been oriented to the analysis of reset processes of interest in physics. 
For instance,
a finite-velocity heterogeneous diffusion process in presence of stochastic Poissonian resetting is  analysed in Sandev et al.\ \cite{sandev2022stochastic}. 
Recent studies on the Brownian motion subject to instantaneous and noninstantaneous resets 
have been presented by 
Sokolov \cite{sokolov2023linear} and Bodrova and Sokolov \cite{bodrova2020resetting}, respectively.  
Time-averaging and nonergodicity of reset geometric Brownian motion are treated in 
Vinod et al.\ \cite{vinod2022time}, \cite{vinod2022nonergodicity}. 
Similar problems are illustrated in Wang et al.\ \cite{wang2021time} for the fractional Brownian
motion and heterogeneous diffusion processes. 

\par
It should be stressed that the introduction of resets in the applications of stochastic processes is not necessarily viewed as a  phenomenon negatively affecting the system. Indeed, in various stochastic models the effect of randomly occurring resets results in a regularization of the time-course of the process. In the model under investigation, indeed, the presence of Poisson-paced resets leads to the existence of a bona-fide stationary distribution, which is not existing in absence of resets. 
\par 
\subsection{Aims of the paper}
Along the line of the mentioned studies, the aim of this paper is to investigate the properties of the telegraph process on the real line undergoing stochastic Poissonian resetting to the origin, where the numbers of displacements of the motion along each of two possible directions follow two GCP's with intensity $\lambda$. The base of our analysis is the one-dimensional stochastic process studied in the first part of Di Crescenzo et al.\ \cite{di2023some}, that describes a finite-velocity random motion in $\mathbb{R}$, with 2 velocities alternating according to a GCP. 
Moreover, in our model the resets are assumed to be instantaneous, as customary in this contexts, and as already considered in several papers mentioned above. This should be interpreted as if the resets to the origin occurs on a faster time scale with respect to the assumed alternating velocities $v_1$ and $v_2$ of the random motion, with $v_2<v_1$. 
\par
A typical approach for the study of the telegraph process and its generalization focuses on the construction and solution of partial differential equations for the probability density. 
On the contrary, our investigation will be henceforth based on the Markov property of the Poissonian resetting instants, and on the knowledge of the probability law of the process in absence of catastrophes. It is noteworthy that this allows us to obtain in closed form the probability density of the process at any finite time $t\in \mathbb{R}^+$. The study includes also the analysis of its behavior in proximity of the extremes of the diffusion interval $(v_2 t, v_1 t)$, as well as the determination of the flow function, which measures the preferential direction of the motion at any point $(x,t)$. 
Then, we focus on some asymptotic results. Indeed, we are able to obtain the limiting densities of the process when the intensity $\lambda$ tends to $+\infty$. A quite different behavior is found in the two cases when $v_2<0<v_1$ and $0<v_2<v_1$.  Indeed, in the first case the limiting density has support 
$(v_2 t, v_1 t)$ and is unimodal in 0, whereas in the second one it has support 
$(0, v_1 t)$, it is piecewise decreasing and has an upward jump in $x=v_2 t$. 
The two different behaviors are due to the fact that in the first (second) case the resets push the process toward a state, i.e.\ the origin, which belongs (does not belong) to the interval $(v_2 t, v_1 t)$, which is the support of the absolutely continuous component of the density in the absence of resets. We focus also on the stationary density of the process as $t\to +\infty$, which is found to be unimodal. Even in this case a quite different behavior is seen in the two cases $v_2<0<v_1$ and $0<v_2<v_1$. Moreover, we study also the probability density of the process subject to random initial velocity,  by mixing the cases with fixed initial velocity. 
\par
Furthermore, for a more detailed understanding of the random motion with resets we analyse the relevant moments. Such study is performed by first determining the moment-generating function (MGF) of the process conditional on the initial velocity, which is expressed by means of the generalized gamma function. This allows us to provide the moments of order one and two of the process, whose extreme values and limits are also investigated.  
\par
Finally, we focus on the mean-square distance between the processes with and without resets, that is given in terms of the second order moment in the absence of resets, which in turn is equal to the asymptotic mean-square distance for $\xi \to +\infty$. It is worth mentioning that the lowest mean-square distance strongly depends on the considered parameters, and in some cases it is attained 
even for non-trivial positive values of $\xi$. 

\subsection{Plan of the paper}
The paper is organized as follows: in Section \ref{sec:2} we recall some definitions about the GCP and we introduce the resetting mechanism for the considered one-dimensional random motion. In Section \ref{sec:4} we illustrate the main results obtained for the probability density functions (PDF) of the process, the asymptotic analysis for the PDF of the process conditional on given initial velocity,  and also conditional on random initial velocity. 
Section \ref{sec:5} is devoted to obtain the MGF of the process 
and the corresponding moments of order one and two. The mean-square distance between the processes with and without resets at zero  is also investigated. 
Finally, in Section \ref{sec:6} we draw our conclusions. 
\par
Throughout the paper,  
${\mathbbm{1}}_A$ denotes the indicator function, i.e.\ ${\mathbbm{1}}_A=1$ if $A$ is true, and ${\mathbbm{1}}_A=0$ otherwise.

\section{The distribution of intertimes and resets}\label{sec:2}

We begin this section by giving some preliminary results on the GCP. For instance,  referring to \cite{cha2013note} or Section 3.1 of \cite{di2023some}, we recall that if $\{N^{(\alpha)}(t), t \in\mathbb{R}_0^{+}\}$ is a Poisson process with intensity $\alpha$, and if $U_{\lambda}$ is an exponential distribution with mean $\lambda \in \mathbb{R}^{+}$, then 
the mixed Poisson process $\{\Tilde{N}_{\lambda}(t), t \in\mathbb{R}_0^{+}\}$ with marginal distribution  
\begin{equation}
	{\rm P}[\Tilde{N}_{\lambda}(t)=k]= \int_{0}^{t} {\rm P}\big[N^{(\alpha)}(t)=k\big]\,{\rm d}U_{\lambda}(\alpha),\quad  t \in\mathbb{R}_0^{+}, \quad k \in \mathbb{N}_{0},
	\label{eq:13}
\end{equation}
is called a GCP with intensity $\lambda$. 
We recall that its probability distribution satisfies the following properties:
\begin{itemize}
	\item[(i)]  $\Tilde{N}_{\lambda}(0)=0$;
	\item[(ii)] for all $s,t \in\mathbb{R}_0^{+}$ and $k \in \mathbb{N}_{0}$,
	\begin{equation}
		{\rm P}\{\Tilde{N}_{\lambda}(t+s)-\Tilde{N}_{\lambda}(t)=k\}= \dfrac{1}{1+\lambda s} \bigg( \dfrac{\lambda s}{1+\lambda s}\bigg)^{k}.
		\label{eq:14}
	\end{equation}
\end{itemize}
In addition, one has ${\rm E}\left[\frac{\Tilde{N}_{\lambda}(t)}{t}\right]=\lambda$ as well as for the classical Poisson process $N^{(\lambda)}(t)$, 
but in this case $\frac{\Tilde{N}_{\lambda}(t)}{t}\xrightarrow{d} Y$ as $t\to +\infty$, with $Y$ exponentially distributed with mean $\lambda$, whereas for the Poisson process one has 
$\frac{N^{(\lambda)}(t)}{t}\xrightarrow{p} \lambda$ as $t\to +\infty$. 
However, even for the GCP $\Tilde{N}_{\lambda}(t)$ the parameter $\lambda$ gives the mean frequency of occurrence of events. 
\par
Moreover, denoting by $\Tilde{T}_{n,\lambda}$ the $n$-th epoch of the process $\Tilde{N}_{\lambda}(t)$, its PDF is expressed as follows:
\begin{equation}
	f_{\Tilde{T}_{n,\lambda}}(t)=n\bigg( \dfrac{\lambda t}{1+\lambda t}\bigg)^{n-1}\dfrac{\lambda}{\big(1+\lambda t\big)^{2}}, \qquad t \in\mathbb{R}_0^{+}, \quad n\in \mathbb{N}.
	\label{eq:15}
\end{equation}
It is worth mentioning that the intertimes between two consecutive epochs, denoted by  $\Tilde{D}_{n,\lambda}=\Tilde{T}_{n,\lambda}-\Tilde{T}_{n-1,\lambda}$, with $n \in \mathbb{N}$, and 
with $\Tilde{T}_{0,\lambda}=0$, are dependent. All random durations $\Tilde{D}_{n,\lambda}$, $n \in \mathbb{N}$, have a modified Pareto distribution of type I, with marginal PDF
\begin{equation}
	f_{\Tilde{D}_{n,\lambda}}(t)=\frac{\lambda}{\big(1+\lambda t\big)^{2}}, 
	\qquad t \in\mathbb{R}_0^{+}.
	\label{eq:16}
\end{equation}
%
%
%
%
%
%
%
\par
Aiming to describe the motion of a  particle moving on the real line, let us now consider the stochastic process 
$\{({X}(t),V(t)), t \in \mathbb{R}_0^{+}\}$, with state-space $\mathbb{R}\times\{{v}_1,{v}_2\}$, 
for ${v}_1,{v}_2\in \mathbb{R}$, with ${v}_2<{v}_1$. Specifically, ${X}(t)$ and $V(t)$ describe respectively the position and the velocity of the particle at time $t$, for initial conditions  
\begin{equation}
	{X}(0)=0, \qquad V(0)= {v}_j, \quad j=1,2, 
	\label{eq:5}
\end{equation}
with 
\begin{equation}
	{X}(t)=\int_{0}^{t} V(s) \,\textnormal{d}s, \qquad V(t)=\frac{v_1+v_2}{2}
	+(-1)^{j-1}\,\frac{v_1-v_2}{2}(-1)^{N(t)}, \qquad t\in \mathbb{R}^+,
	\label{eq:6}
\end{equation}
where $j=1$ if $V(0)=v_1$, and $j=2$ if $V(0)=v_2$. 
The process $N(t)$ appearing in the second term of (\ref{eq:6}) is an alternating counting process, 
whose two subprocesses are independent GCP's. The two mentioned 
subprocesses describe the number of random intervals during which the particle runs along velocity $v_j$, for $j=1,2$. 
Clearly, the process $({X}(t),V(t))$ constitutes the extended telegraph process with underlying GCP's that has been studied extensively in the first part of the paper \cite{di2023some}. 
Specifically, in the previous paper the two independent GCP's are characterized by possibly different parameters $\lambda_1$ and $\lambda_2$ referring to the intervals during which the particle runs with velocity $v_1$ and $v_2$, respectively. 
However, in analogy to the classical telegraph process which is characterized by a single intensity and aiming to obtain more manageable expressions, from now on we assume that the parameters of the underlying GCP's are identical, i.e.\ 
$\lambda_1=\lambda_2=:\lambda$. 
\par
We recall that at every instant $t \in \mathbb{R}^+$ the position $X(t)$ is confined in $[v_2t,v_1t]$. 
Indeed, if the particle does not change velocity in $[0,t]$, then it occupies one of the extremes of $[v_2t,v_1t]$ depending on the initial velocity $V(0)=v_j$, $j=1,2$. Otherwise, if the particle changes velocity at least once in $[0,t]$, then it occupies a state in  $(v_2t,v_1t)$. 
For $t\in \mathbb{R}^{+}$, $v_2t<x<v_1t$ and $i,j=1,2$, we denote by 
\begin{equation}
	p_i(x,t \,|\, v_j)
	=\frac{1}{{\rm d}x}{\rm P}[{X}(t)\in {\rm d}x, V(t)=v_i\,|\,X(0)=0, V(0)=v_j]
	\label{eq:defpijX}
\end{equation}
the sub-densities concerning the motion described by the process $({X}(t),V(t))$. 
We remark that they can be obtained through an approach based on the analysis of the intertimes between consecutive velocity changes, so that for $t\in \mathbb{R}^{+}$ and $v_2t<x<v_1t$ one has (cf.\ Section 3.2 
of \cite{di2023some} for $\lambda_1=\lambda_2=:\lambda\in \mathbb{R}^+$)   
\begin{equation}
	p_i(x,t\,\vert \,v_j)
	=\frac{\lambda\, c_{i,j}}{(v_1-v_2)(1+\lambda t)^2}, \qquad 
	C:=(c_{i,j})=\left(
	\begin{array}{cc}
		\lambda \tau  & 1+\lambda \tau \\
		1+\lambda (t-\tau)  & \lambda (t-\tau)
	\end{array}\right),
	\label{eq:pijX}
\end{equation}
for $i,j=1,2$ and 
\begin{equation}
	\tau:=\tau(x,t)=\frac{x-v_2t}{v_1-v_2}.
	\label{eq:25}
\end{equation}
%
%
%
Other details on the properties and the probability distributions of $({X}(t),V(t))$ can be found in \cite{di2023some}. 
\par
In order to incorporate the effect of instantaneous resets, let us now study the modified process $\{(\Tilde{X}(t),V(t)), t \in \mathbb{R}_0^{+}\}$, having  state-space $\mathbb{R}\times\{{v}_1,{v}_2\}$. Here, 
the only difference with the motion described by $(X(t),V(t))$ is that now the particle undergoes instantaneous resets into state 0 according to an independent Poisson process with intensity $\xi\in \mathbb{R}^+$. Hence, the components $\Tilde{X}(t)$ and $V(t)$ describe respectively the position and the velocity of the particle at time $t$ under the new assumption.  Clearly, 
$\xi$ is the resetting rate, so that $\xi^{-1}$ is the mean time between two consecutive resetting events. We stress that the resettings to the origin 
occur instantaneously. Rather than implying that such transitions occur with infinite velocity, we assume in practice that the transition speed is much greater than the signal velocities $v_1$ and $v_2$ of the considered telegraph process. Moreover, the restarting velocity of the particle after any resetting is identical to the initial velocity $V(0)=v_j$. 
\par
As example, a sample path of $\Tilde{X}(t)$ with two resets is shown in Figure \ref{FigSamplePath}. We remark that for $\xi \to 0$ the process $\Tilde{X}(t)$ behaves as the extended telegraph process with underlying GCP's with parameter $\lambda$, i.e.\ 
the process ${X}(t)$ described so far. In brief, we can write $X(t)=\Tilde{X}(t)\vert_{\xi \to 0}$. On the other hand, $\xi \to \infty$ refers to the case in which the resets to the origin occur with infinitely high rate, so that in this case $\Tilde{X}(t)\,{\stackrel{p}{\to}} \,0$, i.e.\ $\Tilde{X}(t)$ converges to $0$ in probability. 
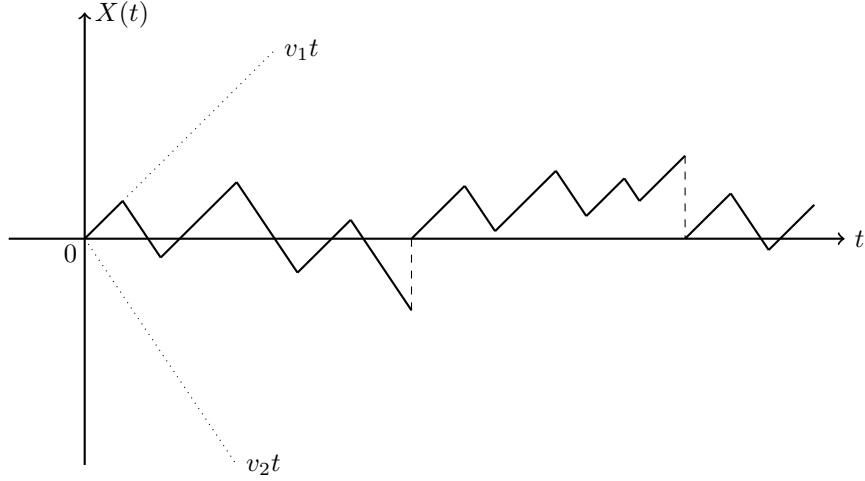
\begin{figure}[!t]
	\centering
	\begin{tikzpicture}
		\draw[->,thick] (-1,0) -- (10,0) node[anchor=west]{$t$};
		\draw[->,thick] (0,-3) -- (0,3) node[anchor=west]{$\Tilde{X}(t)$};
		\filldraw[black] (-0.4,-0.2) node[anchor=west]{$0$};
		\draw[black,thick] (0,0) -- (0.5,0.5);
		\draw[black,thick] (0.5,0.5) -- (1.0,-0.25);
		\draw[black,thick] (1.0,-0.25) -- (2.0,0.75);
		\draw[black,thick] (2.0,0.75) -- (2.80,-0.45);
		\draw[black,thick] (2.80,-0.45) -- (3.50,0.25);
		\draw[black,thick] (3.50,0.25) -- (4.30,-0.95);
		\draw[black,dashed] (4.30,-0.95) -- (4.30,0);
		\draw[black,thick] (4.30,0) -- (5.00,0.70);
		\draw[black,thick] (5.00,0.70) -- (5.40,0.10);
		\draw[black,thick] (5.40,0.10) -- (6.20,0.90);
		\draw[black,thick] (6.20,0.90) -- (6.60,0.30);
		\draw[black,thick] (6.60,0.30) -- (7.10,0.80);
		\draw[black,thick] (7.10,0.80) -- (7.30,0.50);
		\draw[black,thick] (7.30,0.50) -- (7.90,1.10);
		\draw[black,dashed] (7.90,1.10) -- (7.90,0.00);
		\draw[black,thick] (7.90,0.00) -- (8.50,0.60);
		\draw[black,thick] (8.50,0.60) -- (9.00,-0.15);
		\draw[black,thick] (9.00,-0.15) -- (9.60,0.45);
		\draw[black,dotted] (0.5,0.5) -- (2.5,2.5) node[anchor=west]{$v_1 t$};
		\draw[black,dotted] (0.,0.) -- (2.,-3.) node[anchor=west]{$v_2 t$};
		%
		%
		%
		%
	\end{tikzpicture} 
	\caption{A sample path of $\Tilde{X}(t)$, with $V(0)=v_1$.}
	\label{FigSamplePath}
\end{figure}
\par
Due to the given assumptions, under the initial condition 
$\Tilde{X}(0)=0, V(0)=v_j$ one has that the conditional probability law of 
$(\Tilde{X}(t),V(t))$,  for $t\in \mathbb{R}^{+}$ and $j=1,2$ 
is  characterized by two components:   
\begin{itemize}
	\item[(i)] a discrete component:
	\begin{equation}
		{\rm P}[\Tilde{X}(t)=v_jt, V(t)=v_j\,|\,\Tilde{X}(0)=0, V(0)=v_j],
		\label{eq:9}
	\end{equation}
	\item[(ii)] an absolutely continuous component, for $v_2t<x<v_1t$:
	\begin{equation}
		\begin{aligned}
			\Tilde{p}(x,t\,|\,v_j) & = \frac{1}{{\rm d} x}{\rm P}[\Tilde{X}(t)\in {\rm d}x \,|\,\Tilde{X}(0)=0, V(0)=v_j]\\
			& =\Tilde{p}_1(x,t\,|\,v_j)
   +\Tilde{p}_2(x,t\,|\,v_j),  
		\end{aligned}
		\label{eq:10}
	\end{equation}
	with the sub-densities $p_1$ and $p_2$  defined as 
	\begin{equation} 
		\Tilde{p}_i(x,t\,|\,v_j)\,{\rm d}x= 
		{\rm P}[\Tilde{X}(t)\in {\rm d}x, V(t)=v_i\,|\,X(0)=0, V(0)=v_j], \qquad i=1,2.
		\label{eq:11}
	\end{equation}
\end{itemize}
\par
By making use of the strong Markov property of the Poisson-paced resetting mechanism, the sub-densities (\ref{eq:11}), for $t\in \mathbb{R}^{+}$, $v_2t<x<v_1t$ and $i=1,2$ 
satisfy the following relation:
\begin{equation}
	\Tilde{p}_i(x,t \,|\, v_j)
	=e^{-\xi t}p_i(x,t \,|\, v_j)+\xi \int_{0}^t e^{-\xi s} p_i(x,s\,|\,v_j)\,{\rm d}s, 
	\qquad j=1,2, 
	\label{eq:12}
\end{equation}
where $p_i(x,t \,|\, v_j)$ are the sub-densities concerning the motion in the absence of resets defined in Eq.\ (\ref{eq:defpijX}) and provided in Eq.\	(\ref{eq:pijX}). 
Recalling that the probability that no resettings occur for time intervals greater than $t$ is $e^{-\xi t}$, the first term on the right-hand-side of Eq.\ (\ref{eq:12}) refers to the case in which the motion proceeds without resets in $[0,t]$. 
The last term of Eq.\ (\ref{eq:12}) is concerning the case when the motion experiences at least a reset in $[0,t]$, the last one occurring at time $t-s$. 
\par
Eq.\ (\ref{eq:12}) will play a considerable role in the next section for the determination of the probability law of $(\Tilde{X}(t),V(t))$. 
Moreover, it is worth mentioning that Eq.\ (\ref{eq:12}) is also 
adopted in the analysis of multidimensional diffusion processes and random walks with stochastic resetting (see, for instance, Evans et al.\ \cite{evans2014diffusion}, 
Chechkin and Sokolov \cite{chechkin2018random}, Masò-Puigdellosas et al.\ \cite{maso2019transport}). 
We recall that Eq.\ (\ref{eq:12}) has been successfully used by Majumdar et al.\ \cite{majumdar2015dynamical} and Sandev et al.\ \cite{sandev2022heterogeneous} in order to   
analyse the transition to the stationary state for long times through the large deviation function 
for the Brownian motion process and for heterogeneous diffusion processes, respectively.
Unfortunately, a similar approach is not feasible for the process under investigation, since the form of the densities (\ref{eq:pijX}) does not lead to a suitable large deviation function. 

%
\section{The probability law}\label{sec:4}

In this section, we obtain and analyse the probability distribution of the stochastic process $\{(\Tilde{X}(t),V(t)), t \geq 0\}$, i.e.\ the extended telegraph process with underlying GCP's with parameter $\lambda$ subject to a reset mechanism governed by a Poisson process with intensity $\xi$,  introduced in Section \ref{sec:2}. 
The initial condition is $\Tilde{X}(0)=0$, $V(0) = v_j$, $j=1,2$, with $v_2<v_1$. 
To avoid trivial cases, from now on we assume that $v_j\neq 0$, for $j=1,2$. 
We shall discuss the cases when $v_2 < 0 < v_1$ and $0 < v_2 < v_1$, since the other cases can be obtained by symmetry. 
\par
As already mentioned in the previous section, the probability distribution of the process is expressed in terms of an absolutely continuous component and a discrete component. As customary in the literature, the latter will be expressed in terms of the delta Dirac function $\delta(\cdot)$, so that the densities given in 
(\ref{eq:11}) and the related PDF's are viewed as generalized densities. 
\par
To ease the notation, hereafter we define the following function, 
for $x\in \mathbb{R}$ and $t\in \mathbb{R}^+$, 
\begin{equation}
	\Gamma_{\lambda}^{\xi}(x,t)=
	\begin{cases}
		\Gamma\Big[0, \Big(M_x +\frac{1}{\lambda}\Big)\xi, \Big(t+\frac{1}{\lambda}\Big)\xi\Big], \qquad \ {\rm if}\ v_2 < 0 < v_1,\\[1.5ex]
		\Gamma\Big[0, \Big(\frac{x}{v_1}+\frac{1}{\lambda}\Big) \xi, \Big(m_{x,t}+\frac{1}{\lambda}\Big)\xi\Big], \quad \ {\rm if}\ 0 < v_2 < v_1,\\
	\end{cases}
	\label{eq:19}
\end{equation}
with 
\begin{equation}
	M_x:=\max\bigg\{\frac{x}{v_1},\frac{x}{v_2}\bigg\}, \quad
	m_{x,t}:=\min\bigg\{\frac{x}{v_2},t\bigg\},
	\label{eq:20}
\end{equation}
and where 
\begin{equation}
	\Gamma(a, z_0,z_1) = \int_{z_0}^{z_1} t^{a-1} e^{-t}\,{\rm d}t
	\label{eq:GIGF}
\end{equation}
is the generalized incomplete  Gamma function. Moreover, ${\rm sgn}(v_j)$ will denote the direction of the motion concerning the velocity $v_j$, $j=1,2$.
\par
We are now able to determine the sub-densities of the process defined in 
Eq.\ (\ref{eq:11}). 
\begin{theorem}\label{theorem:4.1}
	Let $\{(\Tilde{X}(t),V(t)), \; t\in \mathbb{R}_0^{+}\}$ be the extended telegraph process with underlying GCP's with parameter $\lambda$, subject to resets to the origin with intensity $\xi$. For all $t > 0$, and $v_2t<x<v_1 t$, for $j=1,2$  we have 
	\begin{equation}
		\begin{aligned}
			\Tilde{p}_j(x,t \,|\, v_j)&=\frac{e^{-\xi t}\,\delta(x-v_jt)}{1+\lambda t}+{\mathbbm{1}}_{\{v_2 t <x <v_1t\}}\frac{e^{-\xi t}\lambda^2 \tau^{2-j}(t-\tau)^{j-1}}{(v_1-v_2)(1+\lambda t)^2}\\
			&+{\rm sgn}(v_j)\,{\mathbbm{1}}_{\{0 < \frac{x}{v_j}< t\}}\frac{\xi e^{-\xi \frac{x}{v_j}}}{v_j+\lambda x}+\boldsymbol{I}(x,t)\Bigg[(-1)^{3-j}\,\frac{\xi (v_{3-j}+\lambda x)}{(v_1-v_2)^2}\, \Theta_{\lambda}^{\xi}(x,t)\\
			&+(-1)^j \, \frac{\xi e^{\frac{\xi}{\lambda}}}{(v_1-v_2)^2}\Bigg(\frac{v_{3-j}(\lambda+\xi)}{\lambda}+x \xi\Bigg)\,\Gamma_{\lambda}^{\xi}(x,t)\Bigg],
		\end{aligned}
		\label{eq:21}
	\end{equation}
	\begin{equation}
		\begin{aligned}
			\Tilde{p}_{3-j}(x,t \,|\, v_j)&={\mathbbm{1}}_{\{v_2 t <x <v_1t\}}\frac{e^{-\xi t}\lambda[1+\lambda \tau^{j-1}(t-\tau)^{2-j}]}{(v_1-v_2)(1+\lambda t)^2}
   \\
&   +\boldsymbol{I}(x,t)\,\Bigg[(-1)^{j}\frac{\xi (v_{3-j}+\lambda x)}{(v_1-v_2)^2}\, \Theta_{\lambda}^{\xi}(x,t)
   \\
			&+(-1)^{3-j}\frac{\xi e^{\frac{\xi}{\lambda}}}{(v_1-v_2)^2}\Bigg(\frac{v_j(\lambda+\xi)}{\lambda}+ x \xi+(-1)^j\frac{\xi(v_1-v_2)}{\lambda}\Bigg)\,\Gamma_{\lambda}^{\xi}(x,t)\Bigg],
		\end{aligned}
		\label{eq:22}
	\end{equation}
	where 
	$\tau=\tau(x,t)$ and $\Gamma_{\lambda}^{\xi}(x,t)$ are defined respectively in Eqs.\ (\ref{eq:25}) and (\ref{eq:19}), with 
	\begin{equation}
		\boldsymbol{I}(x,t)={\mathbbm{1}}_{\{\min\{v_2 t,0\} < x < v_1 t\}},
		\label{eq:24}
	\end{equation}
	\begin{equation}
		\Theta_{\lambda}^{\xi}(x,t)=\left\{
		\begin{array} {ll}
			\displaystyle\frac{e^{-\xi M_x}}{1+\lambda M_x}-\frac{e^{-\xi t}}{1+\lambda t}, & {\rm if}\ v_2 < 0 < v_1,\\[2ex]
			\displaystyle\frac{e^{-\xi\frac{x}{v_1}}}{1+\lambda \frac{x}{v_1}}-\frac{e^{-\xi m_{x,t}}}{1+\lambda m_{x,t}}, & {\rm if}\ 0 < v_2< v_1.
		\end{array}
		\right.
		\label{eq:23}
	\end{equation}
\end{theorem}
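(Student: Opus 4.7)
The plan is to exploit the Markov-property decomposition (\ref{eq:12}) upgraded to generalized densities, namely applied to the \emph{full} probability law of $(X(s),V(s))$ in the absence of resets: the singular mass $(1+\lambda s)^{-1}\delta(x-v_j s)$ on the $V=v_j$ sheet, together with the AC part $p_i(x,s\mid v_j)\mathbbm{1}_{\{v_2 s<x<v_1 s\}}$ given by (\ref{eq:pijX}). The first term on the right-hand side of (\ref{eq:12}) then produces both the Dirac contribution $e^{-\xi t}\delta(x-v_jt)/(1+\lambda t)$ (no reset, no velocity change) and the first AC summand $e^{-\xi t}\lambda^2\tau^{2-j}(t-\tau)^{j-1}/[(v_1-v_2)(1+\lambda t)^2]$ (no reset, at least one velocity change) in (\ref{eq:21}); no Dirac appears in (\ref{eq:22}) because the initial velocity $v_j$ differs from $v_{3-j}$.

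Next I would evaluate the reset integral $\xi\int_0^t e^{-\xi s}P_i(x,s\mid v_j)\,ds$ in two pieces. The atomic piece, handled by the sifting property at $s=x/v_j$, produces the term ${\rm sgn}(v_j)\,\xi e^{-\xi x/v_j}/(v_j+\lambda x)\,\mathbbm{1}_{\{0<x/v_j<t\}}$ on the $i=j$ sheet only (consistent with its appearance in (\ref{eq:21}) and absence from (\ref{eq:22})). The AC piece requires first identifying the set of $s\in(0,t)$ for which $\mathbbm{1}_{\{v_2s<x<v_1s\}}$ is active: this gives $s\in(M_x,t)$ when $v_2<0<v_1$ and $s\in(x/v_1,m_{x,t})$ when $0<v_2<v_1$, with $M_x$ and $m_{x,t}$ as in (\ref{eq:20}). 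Precisely these endpoints enter the definitions of $\Theta_\lambda^\xi$ and $\Gamma_\lambda^\xi$ in (\ref{eq:19}) and (\ref{eq:23}), and the non-emptiness of the window is exactly the indicator $\boldsymbol{I}(x,t)$ of (\ref{eq:24}).

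Writing each entry $c_{i,j}$ of the matrix $C$ in (\ref{eq:pijX}) as a first-degree polynomial $A_{i,j}(x)+B_{i,j}s$ in $s$, I would then use the decomposition
\begin{equation*}
\frac{A+Bs}{(1+\lambda s)^2}=\frac{A-B/\lambda}{(1+\lambda s)^2}+\frac{B/\lambda}{1+\lambda s}
\end{equation*}
to reduce the AC piece to two elementary integrals of the form $\int e^{-\xi s}(1+\lambda s)^{-k}\,ds$ for $k\in\{1,2\}$. The substitution $u=\xi(s+1/\lambda)$ converts each of them into a generalized incomplete Gamma function (\ref{eq:GIGF}), while the integration-by-parts identity $\int u^{-2}e^{-u}\,du=-u^{-1}e^{-u}-\int u^{-1}e^{-u}\,du$ re-expresses the $k=2$ integral as a boundary piece (which assembles into $\Theta_\lambda^\xi(x,t)$) plus a further $\Gamma(0,\cdot,\cdot)$ correction. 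Collecting all contributions and carrying out the algebra yields (\ref{eq:21})--(\ref{eq:22}).

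The main obstacle is the bookkeeping rather than any single calculation: one must handle signs carefully when $v_j<0$ (so that the combined formulas cover both $v_2<0<v_1$ and $0<v_2<v_1$ in a unified way), identify the correct integration window in each regime, and, for the cross-velocity density (\ref{eq:22}), keep track of the extra ``$+1$'' appearing in the off-diagonal entries of $C$, which is ultimately responsible for the additional term $(-1)^j\xi(v_1-v_2)/\lambda$ inside the $\Gamma_\lambda^\xi$ coefficient of (\ref{eq:22}) that is absent from (\ref{eq:21}).
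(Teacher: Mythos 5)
Your proposal is correct and follows essentially the same route as the paper: both apply the renewal decomposition (\ref{eq:12}) to the full law (atom plus absolutely continuous part), use the sifting property for the Dirac contribution, identify the integration windows $(M_x,t)$ and $(x/v_1,m_{x,t})$ in the two velocity regimes, and reduce the remaining integral to generalized incomplete Gamma functions. Your extra detail on the partial-fraction split of $(A+Bs)/(1+\lambda s)^2$ and the integration by parts simply makes explicit the ``some calculations'' the paper leaves implicit.
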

\begin{proof}
	We assume that $v_2 < 0 < v_1$ and that the initial velocity is $V(0)=v_1$. Recalling Eq.\ (\ref{eq:12}) and using Eq.\ (\ref{eq:pijX})  we have
	\begin{equation}
		\Tilde{p}_1(x,t \,|\, v_1)=e^{-\xi t}p_1(x,t \,|\, v_1)+\xi \int_{0}^t e^{-\xi s}p_1(x,s \,|\, v_1)\,{\rm d}s.
		\label{eq:26}
	\end{equation}
	Then, Eq.\ (\ref{eq:26})  can be computed as follows
	\begin{equation}
		\begin{aligned}
			\Tilde{p}_1(x,t \,|\, v_1)&=\frac{e^{-\xi t}\, \delta(x-v_1t)}{1+\lambda t}+\mathbbm{1}_{\{v_2 t <x <v_1t\}}\frac{e^{-\xi t}\lambda^2 \tau(x,t)}{(v_1-v_2)(1+\lambda t)^2} \,{\rm d}s\\
			&+\xi\int_{0}^t e^{-\xi s}\,\frac{\delta(x-v_1s)}{1+\lambda s}\,{\rm d}s + \xi\int_{0}^t e^{-\xi s}\mathbbm{1}_{\{v_2 s <x <v_1s\}}\frac{\lambda^2 \tau(x,s)}{(v_1-v_2)(1+\lambda s)^2} \,{\rm d}s
		\end{aligned}
		\label{eq:27}
	\end{equation}
	where $\tau(x,s)$ is given in Eq.\ (\ref{eq:25}). Indeed, the first  term on the right-hand side of Eq.\ (\ref{eq:27}) refers to the case of no velocity changes and no resetting until time $t$, 
	whereas the second  term is concerning the case of at least one velocity change and no resetting until time $t$. 
	The last two terms indicate that the last reset occurs at time $t-s$, so that in the time interval 
	$(t-s, t)$  there are no resets. Moreover, we distinguish between the case of no velocity changes in $(t-s, t)$, in the third term on the right-hand side of Eq.\ (\ref{eq:27}), and at least one velocity change in the same interval, in the fourth term. For the first integral in Eq.\ (\ref{eq:27}), 
	recalling that $\delta(ax)=\delta(x)/|a|$, we have:
	\begin{equation}
		\xi\int_{0}^t e^{-\xi s}\,\frac{\delta(x-v_1s)}{1+\lambda s}\,{\rm d}s
		=\mathbbm{1}_{\{0 < x < v_1 t\}}\frac{\xi\, e^{-\xi \frac{x}{v_1}}}{v_1+\lambda x}.
		\label{eq:28}
	\end{equation}
	For the second integral of Eq.\ (\ref{eq:27}), we need to analyse two cases: 
	\begin{itemize}
		\item[(i)] $\frac{x}{v_1} > \frac{x}{v_2}$ if and only if $x>0$;
		\item[(ii)] $\frac{x}{v_2} > \frac{x}{v_1}$ if and only if $x<0$.
	\end{itemize}
	Recalling Eqs.\ (\ref{eq:27}) and (\ref{eq:25}), under condition (i) we have 
	\begin{equation}
		\begin{aligned}
			& \hspace{-2cm} \xi\int_{0}^t e^{-\xi s}\mathbbm{1}_{\{v_2 s <x <v_1s\}}\frac{\lambda^2 \tau(x,s)}{(v_1-v_2)(1+\lambda s)^2} \,{\rm d}s
			\\
			&=\frac{\xi\,\lambda^2}{(v_1-v_2)^2}\int_{0}^t e^{-\xi s}\mathbbm{1}_{\{v_2 s <x <v_1s\}}
			\frac{x-v_2 s}{(1+\lambda s)^2} \,{\rm d}s 
			\\
			&=\frac{\xi\,\lambda^2}{(v_1-v_2)^2}\int_{x/v_1}^t \frac{e^{-\xi s}(x-v_2 s)}{(1+\lambda s)^2} \,{\rm d}s
			\\
			&=\frac{\xi\,\lambda^2}{(v_1-v_2)^2}\Bigg\{\frac{v_2+\lambda x}{\lambda^2}\Bigg[\frac{v_1 e^{-\xi \frac{x}{v_1}}}{v_1+\lambda x}-\frac{e^{-\xi t}}{1+\lambda t}\Bigg]
			\\
			&-\frac{e^{\frac{\xi}{\lambda}}}{\lambda^2}\Bigg[x\, \xi+\frac{v_2(\lambda+\xi)}{\lambda}\Bigg]\, \Gamma\Bigg[0,\Bigg(\frac{x}{v_1}+\frac{1}{\lambda}\Bigg)\xi,\Bigg(t+\frac{1}{\lambda}\Bigg)\xi\Bigg]\Bigg\},
			%
		\end{aligned}
		\label{eq:29}
	\end{equation}
	where the last equality is obtained after some calculations and from the generalized incomplete Gamma function  (\ref{eq:GIGF}). 
	Similarly,  under condition (ii)  we have
	\begin{equation}
		\begin{aligned}
			& \hspace{-2cm} \xi\int_{0}^t e^{-\xi s}\mathbbm{1}_{\{v_2 s <x <v_1s\}}\frac{\lambda^2 \tau(x,s)}{(v_1-v_2)(1+\lambda s)^2} \,{\rm d}s
			\\
			&=\frac{\xi\,\lambda^2}{(v_1-v_2)^2}\Bigg\{
			\frac{v_2+\lambda x}{\lambda^2}\Bigg[\frac{v_2 e^{-\xi \frac{x}{v_2}}}{v_2+\lambda x}-\frac{e^{-\xi t}}{1+\lambda t}\Bigg]\\
			&-\frac{e^{\frac{\xi}{\lambda}}}{\lambda^2}\Bigg[x\, \xi+\frac{v_2(\lambda+\xi)}{\lambda}\Bigg]\, \Gamma\Bigg[0,\Bigg(\frac{x}{v_2}+\frac{1}{\lambda}\Bigg)\xi,\Bigg(t+\frac{1}{\lambda}\Bigg)\xi\Bigg]\Bigg\}.
		\end{aligned}
		\label{eq:31}
	\end{equation}
	Substituting  Eq.\ (\ref{eq:28}) in Eq.\ (\ref{eq:27}) and taking into account Eqs.\ (\ref{eq:29}) and (\ref{eq:31}), 
	we obtain the probability density $\Tilde{p}_1(x,t \,|\, v_1)$ as expressed in 
	Eq.\ (\ref{eq:21}) for $j=1$.
	The case $j=2$ of Eq.\ (\ref{eq:21}), 
	and the two cases expressed in Eq.\ (\ref{eq:22}) can be treated similarly. The only difference relies on the fact that in Eq.\ (\ref{eq:22}) there is no term concerning a discrete component, since to reach velocity $v_{3-j}$ at time $t$ when the initial velocity is $v_j$, at least a velocity change in $(0,t)$ is required. 
	Finally, when $ 0 <v_2 < v_1$ the analysis proceeds in a similar way,  
	by taking into account in Eq.\ (\ref{eq:27}) that $v_2>0$. 
\end{proof}
The explicit forms of the sub-densities obtained in Theorem \ref{theorem:4.1} 
allow us to study the behavior of $\Tilde{p}_i(x,t\,|\,v_j)$ for $x$ in proximity of 
the extremes of the diffusion interval $(v_2 t,v_1 t)$, for $t \in \mathbb{R}^+$. 
\begin{corollary}\label{corollary:4.1}
	Under the assumptions of Theorem \ref{theorem:4.1}, for $t \in \mathbb{R}^+$ and $j=1,2$, we have
	\begin{equation*}
		\begin{aligned}
			&\lim_{x \to v_j t} \Tilde{p}_j(x,t\,|\,v_j)=\frac{e^{-\xi t}}{1+\lambda t}\Bigg[\frac{\xi}{|v_j|}+\frac{\lambda^2 t}{(v_1-v_2)(1+\lambda t)}\Bigg],\\
			&\lim_{x \to v_j t} \Tilde{p}_j(x,t\,|\,v_{3-j})=\frac{e^{-\xi t} \lambda}{(v_1-v_2)(1+\lambda t)},\\
			&\lim_{x \to v_j t} \Tilde{p}_{3-j}(x,t\,|\,v_j)=\frac{e^{-\xi t} \lambda}{(v_1-v_2)(1+\lambda t)^2},\\
			&\lim_{x \to v_j t} \Tilde{p}_{3-j}(x,t\,|\,v_{3-j})={\mathbbm{1}}_{\{j=2,\,v_2>0\}}\frac{\xi e^{-\xi \frac{v_2 }{v_1}t}}{v_1+\lambda v_2 t}.
		\end{aligned}
	\end{equation*}
\end{corollary}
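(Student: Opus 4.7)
The proof is a direct substitution of $x = v_j t$ into the closed-form expressions \eqref{eq:21}--\eqref{eq:22} of Theorem~\ref{theorem:4.1} followed by simplification of each summand in the limit. I would treat the four summands of \eqref{eq:21} (and the two summands of \eqref{eq:22}) separately. The delta component contributes only as a distribution concentrated at $v_j t$ and thus drops out of the pointwise limit taken from within the open support. For the absolutely continuous bulk term, the boundary evaluations $\tau(v_1 t, t) = t$ and $\tau(v_2 t, t) = 0$ obtained from \eqref{eq:25} reduce the polynomial factor $\tau^{2-j}(t-\tau)^{j-1}$ in \eqref{eq:21} to $t$ and $\tau^{j-1}(t-\tau)^{2-j}$ in \eqref{eq:22} to $0$, producing the $\lambda^2 t$ and $\lambda$ factors seen in the stated limits. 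The factor $\mathrm{sgn}(v_j)/(v_j+\lambda v_j t)$ from the third summand of \eqref{eq:21} collapses to $1/[|v_j|(1+\lambda t)]$, accounting for the $|v_j|$ appearing in the first limit.

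The essential simplification is the vanishing of the contributions involving $\Theta_\lambda^\xi(x,t)$ and $\Gamma_\lambda^\xi(x,t)$ at $x = v_j t$. I would verify this case by case from definitions \eqref{eq:19}, \eqref{eq:20} and \eqref{eq:23}: when $v_2 < 0 < v_1$ one has $M_{v_j t} = t$ for both $j$, while in the regime $0 < v_2 < v_1$ the endpoint $x = v_1 t$ satisfies $x/v_1 = m_{x,t} = t$. In each such case the integration interval of the generalized incomplete Gamma function in \eqref{eq:GIGF} degenerates to a single point, so $\Gamma_\lambda^\xi(v_j t, t) = 0$, and likewise $\Theta_\lambda^\xi(v_j t, t) = 0$ directly from \eqref{eq:23}.

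The main obstacle is the fourth limit, where the indicator $\mathbbm{1}_{\{j=2,\,v_2>0\}}$ must be extracted. This amounts to tracking the indicator $\mathbbm{1}_{\{0 < x/v_{3-j} < t\}}$ of the third summand of \eqref{eq:21} (applied with initial velocity $v_{3-j}$) at $x = v_j t$: the condition $v_j t / v_{3-j} \in (0,t)$ is equivalent to $v_j$ and $v_{3-j}$ sharing the same sign, which under $v_2 < v_1$ occurs exactly when $j=2$ and $v_2 > 0$, and in that case the evaluation yields $\xi e^{-\xi v_2 t/v_1}/(v_1 + \lambda v_2 t)$. A careful handling of one-sided limits is also required for the interior-point case $x \to v_2 t$ when $0 < v_2 < v_1$, where $v_2 t$ lies inside the support of $\Tilde{X}(t)$ rather than at its boundary.
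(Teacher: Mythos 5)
Your overall strategy---direct evaluation of the closed forms (\ref{eq:21})--(\ref{eq:22}) at the endpoints, with the delta component set aside and the generalized incomplete Gamma terms shown to degenerate---is exactly the computation the corollary rests on (the paper supplies no separate proof), and your treatment of the limits at $x=v_1t$ and of both endpoints in the regime $v_2<0<v_1$ is correct: the evaluations $\tau(v_1t,t)=t$ and $\tau(v_2t,t)=0$, the collapse of ${\rm sgn}(v_j)/(v_j+\lambda v_j t)$ to $1/[\,|v_j|(1+\lambda t)]$, and the extraction of $\mathbbm{1}_{\{j=2,\,v_2>0\}}$ in the fourth limit are all as intended. (One small imprecision: $0<v_jt/v_{3-j}<t$ is not equivalent to $v_j$ and $v_{3-j}$ sharing a sign --- for $j=1$ with $0<v_2<v_1$ the signs agree but $v_1t/v_2>t$; it is positivity combined with $v_j/v_{3-j}<1$ that singles out $j=2$, $v_2>0$.)

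The genuine gap is the case you defer to your last sentence, $x\to v_2t$ with $0<v_2<v_1$, which is precisely where the substitution argument does not close. There $m_{v_2t,t}=\min\{t,t\}=t$ while $v_2t/v_1<t$, so the integration interval in (\ref{eq:19}) does \emph{not} degenerate: $\Gamma_{\lambda}^{\xi}(v_2t,t)=\Gamma\big[0,(v_2t/v_1+1/\lambda)\xi,(t+1/\lambda)\xi\big]>0$, and likewise $\Theta_{\lambda}^{\xi}(v_2t,t)>0$ by (\ref{eq:23}), while $\boldsymbol{I}(v_2t,t)=1$ since $v_2t$ is interior to $(0,v_1t)$. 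Hence the $\Theta$- and $\Gamma$-terms of (\ref{eq:21})--(\ref{eq:22}) survive in this limit and are accounted for neither in your outline nor in the displayed formulas, which your computation reproduces verbatim only at $x=v_1t$ and in the regime $v_2<0<v_1$. Moreover, the indicators $\mathbbm{1}_{\{v_2t<x<v_1t\}}$ and $\mathbbm{1}_{\{0<x/v_2<t\}}$ are supported on opposite sides of $v_2t$ when $v_2>0$, so the two one-sided limits differ and the direction of approach must be fixed (approach from within the diffusion interval $(v_2t,v_1t)$ retains the bulk term but kills the $\xi e^{-\xi x/v_2}/(v_2+\lambda x)$ term, not the other way around). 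To complete the proof you must either carry out this evaluation explicitly, exhibiting the residual $\Theta_{\lambda}^{\xi}(v_2t,t)$ and $\Gamma_{\lambda}^{\xi}(v_2t,t)$ contributions, or restrict the limits taken at $v_2t$ to the case $v_2<0<v_1$; as written, noting that ``careful handling of one-sided limits is required'' names the difficulty without resolving it.
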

Clearly, the results obtained in Corollary \ref{corollary:4.1} are in agreement with those given in Remark 3 of  \cite{di2023some} when $\xi \to 0$.
\par
Let us now focus on the marginal distribution of $\{\Tilde{X}(t), t \in \mathbb{R}^+_0 \}$ conditional on $\{\Tilde{X}(0) = 0, V (0) = v_j\}$, $j = 1,2$. In this case, the absolutely continuous components for $t \in \mathbb{R}^+$ and $x \in (v_2t, v_1t)$ are described by
the PDF $\Tilde{p}(x,t\,|\,v_j)$, $j = 1,2$, introduced in (\ref{eq:10}). Moreover, 
we shall focus on the  flow function defined as
\begin{equation}
\Tilde{w}(x,t\,|\,v_j)= \Tilde{p}_1(x,t\,|\,v_j)-\Tilde{p}_2(x,t\,|\,v_j), 
\qquad j=1,2
\label{eq:flow}
\end{equation}
and given for $t\in \mathbb{R}_0^{+}$ and $v_2t < x < v_1t$, under initial condition 
$\{\Tilde{X}(0) = 0, V (0) = v_j\}$, $j = 1,2$. 
As pointed out in Orsingher \cite{Orsingher1990}, 
in a large ensemble of particles
moving as $\Tilde{X}(t)$ near $x$ at time $t$,  the function $\Tilde{w}(x,t\,|\,v_j)$ measures the excess of particles moving with velocity $v_1$ with respect to those moving with velocity $v_2$. 
\par
Recalling Eqs.\ (\ref{eq:10}) and (\ref{eq:12}) it is not hard to obtain the following results about the PDF (\ref{eq:10}) and the flow function (\ref{eq:flow}). 
\begin{theorem} \label{theorem:4.2}
	Under the assumptions of Theorem \ref{theorem:4.1}, the PDF and the flow function of the process $\{\Tilde{X}(t), \; t\in \mathbb{R}_0^{+}\}$ conditional on $\{\Tilde{X}(0)=0, V(0)=v_j\}$, for $j=1,2$ and $v_2t < x < v_1t$, are given respectively   by 
	\begin{equation}
		\begin{aligned}
			\Tilde{p}(x,t\,|\,v_j)&=\frac{e^{-\xi t}\, \delta(x-v_jt)}{1+\lambda t}+{\rm sgn}(v_j){\mathbbm{1}}_{\{0 < \frac{x}{v_j} < t\}} \frac{\xi \,e^{-\xi\frac{x}{v_j}}}{v_j+\lambda x}\\
			&+{\mathbbm{1}}_{\{v_2 t <x < v_1 t\}}\frac{e^{-\xi t} \lambda}{(v_1-v_2)(1+\lambda t)}+ \boldsymbol{I}(x,t)\frac{\xi e^{\frac{\xi}{\lambda}}}{(v_1-v_2)}\, \Gamma_{\lambda}^{\xi}(x,t),
		\end{aligned}
		\label{eq:37}
	\end{equation}
	\begin{equation}
		\begin{aligned}
			\Tilde{w}(x,t\,|\,v_j)&=(-1)^{j-1}\frac{\delta(x-v_j t)}{1+\lambda t}+{\mathbbm{1}}_{\{v_2 t < x < v_1 t\}}\frac{\lambda e^{-\xi t}\big[\lambda\big(2\tau-t)+(-1)^j\big]}{(v_1-v_2)(1+\lambda t)^2}\\
			&+{\rm sgn}(v_j)(-1)^{j-1}{\mathbbm{1}}_{\{0 < \frac{x}{v_j} < t\}}\frac{\xi e^{-\xi \frac{x}{v_j}}}{v_j+\lambda x}+\boldsymbol{I}(x,t)\Bigg\{\frac{2 \xi (v_{3-j}+\lambda x)}{(v_1-v_2)^2}\, \Theta_{\lambda}^{\xi}(x,t)\\
			&-\frac{\xi e^{\frac{\xi}{\lambda}}}{(v_1-v_2)^2}\Bigg[2 v_{3-j} \xi\Bigg(\frac{x}{v_{3-j}}+\frac{1}{\lambda}\Bigg)+(v_1+v_2)\Bigg]\Gamma_{\lambda}^{\xi}(x,t)\Bigg\},
		\end{aligned}
		\label{eq:38}
	\end{equation}
	where $\tau$, $\boldsymbol{I}(x,t)$, $\Gamma_{\lambda}^{\xi}(x,t)$ and $\Theta_{\lambda}^{\xi}(x,t)$ are defined respectively in (\ref{eq:25}), (\ref{eq:24}), (\ref{eq:19}) and (\ref{eq:23}).
\end{theorem}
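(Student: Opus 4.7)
The plan is to derive both (\ref{eq:37}) and (\ref{eq:38}) directly from Theorem \ref{theorem:4.1} by combining the sub-densities (\ref{eq:21}) and (\ref{eq:22}) according to
\begin{equation*}
\Tilde{p}(x,t\,|\,v_j)=\Tilde{p}_j(x,t\,|\,v_j)+\Tilde{p}_{3-j}(x,t\,|\,v_j),
\qquad
\Tilde{w}(x,t\,|\,v_j)=(-1)^{j-1}\bigl[\Tilde{p}_j(x,t\,|\,v_j)-\Tilde{p}_{3-j}(x,t\,|\,v_j)\bigr].
\end{equation*}
The first identity is the definition (\ref{eq:10}), while the second follows from (\ref{eq:flow}) since $\Tilde{p}_1-\Tilde{p}_2$ equals $\Tilde{p}_j-\Tilde{p}_{3-j}$ when $j=1$ and its opposite when $j=2$. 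The whole argument thus reduces to summing and subtracting (\ref{eq:21}) and (\ref{eq:22}) term-by-term and simplifying.

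For (\ref{eq:37}) I would proceed as follows. The discrete mass at $x=v_jt$ and the reset-atom ${\rm sgn}(v_j)\mathbbm{1}_{\{0<x/v_j<t\}}\,\xi e^{-\xi x/v_j}/(v_j+\lambda x)$ appear only in $\Tilde{p}_j(x,t\,|\,v_j)$ and are preserved by the sum. The two pieces supported on $(v_2t,v_1t)$ and not involving resets combine via the elementary identity
\begin{equation*}
\lambda^2\tau^{2-j}(t-\tau)^{j-1}+\lambda\bigl[1+\lambda\tau^{j-1}(t-\tau)^{2-j}\bigr]=\lambda(1+\lambda t),
\end{equation*}
valid for $j\in\{1,2\}$ because one of the exponents is always zero and $\tau+(t-\tau)=t$; dividing by $(v_1-v_2)(1+\lambda t)^2$ yields the coefficient in (\ref{eq:37}). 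The $\Theta_\lambda^\xi$ contributions cancel since their prefactors $(-1)^{3-j}$ and $(-1)^j$ carry opposite signs, while the $\Gamma_\lambda^\xi$ contributions combine by means of the identity $(-1)^j v_{3-j}+(-1)^{3-j}v_j=v_1-v_2$, which is immediate for $j=1,2$; this produces the coefficient $\xi e^{\xi/\lambda}/(v_1-v_2)$.

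For (\ref{eq:38}) I would repeat the procedure with the sign pattern dictated by $\Tilde{w}$. The delta mass and the reset-atom survive with prefactor $(-1)^{j-1}$, while the continuous piece on $(v_2t,v_1t)$ simplifies by means of
\begin{equation*}
(-1)^{j-1}\bigl[\tau^{2-j}(t-\tau)^{j-1}-\tau^{j-1}(t-\tau)^{2-j}\bigr]=2\tau-t,
\end{equation*}
which produces the bracket $\lambda(2\tau-t)+(-1)^j$. The $\Theta_\lambda^\xi$ contributions now add rather than cancel, yielding the coefficient $2\xi(v_{3-j}+\lambda x)/(v_1-v_2)^2$, and the $\Gamma_\lambda^\xi$ contributions combine through the same identity for $(-1)^j v_{3-j}+(-1)^{3-j}v_j$, then rewritten by pulling out $2v_{3-j}\xi(x/v_{3-j}+1/\lambda)=2\xi x+2\xi v_{3-j}/\lambda$, which accounts exactly for the grouping displayed in (\ref{eq:38}).

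The main obstacle is not conceptual but book-keeping: one must consistently track the parities $(-1)^{j}$, $(-1)^{3-j}$, and $(-1)^{j-1}$ across many similar-looking terms, and regroup the resulting sums so that the final expressions match the particular forms displayed in (\ref{eq:37}) and (\ref{eq:38}). Crucially, no case distinction between the regimes $v_2<0<v_1$ and $0<v_2<v_1$ is needed at the level of the algebra, since the geometry of each regime is already encoded in the definitions (\ref{eq:19}), (\ref{eq:23}) and (\ref{eq:24}) of $\Gamma_\lambda^\xi$, $\Theta_\lambda^\xi$ and $\boldsymbol{I}$.
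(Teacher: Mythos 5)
Your proposal is correct and is essentially identical to the paper's own proof: the paper simply asserts that (\ref{eq:37}) and (\ref{eq:38}) follow from (\ref{eq:21}) and (\ref{eq:22}) ``after straightforward calculations,'' and you have supplied exactly those calculations, with the parity identities ($(-1)^{3-j}+(-1)^j=0$, $(-1)^jv_{3-j}+(-1)^{3-j}v_j=v_1-v_2$, the $\tau$-identities, and the regrouping $\frac{\xi}{\lambda}[(v_1+v_2)+(-1)^j(v_1-v_2)]=\frac{2v_{3-j}\xi}{\lambda}$) all checking out. One small point worth flagging: your derivation correctly yields the delta term of the flow function as $(-1)^{j-1}e^{-\xi t}\delta(x-v_jt)/(1+\lambda t)$, so the absence of the factor $e^{-\xi t}$ in the first term of (\ref{eq:38}) appears to be a typo in the paper's displayed formula rather than something your argument fails to reproduce.
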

\begin{proof}
	Recalling Eqs.\ (\ref{eq:21}) and (\ref{eq:22}), we obtain Eqs.\ 
	(\ref{eq:37}) and (\ref{eq:38})    after straightforward  calculations.
\end{proof}
Under the assumptions of Theorem \ref{theorem:4.2} the PDF of the process $\{\Tilde{X}(t), \; t\in \mathbb{R}_0^{+}\}$ conditional on $\{\Tilde{X}(0)=0, V(0)=v_j\}$, for $j=1,2$, satisfies
$\int_{\mathbb{R}} \Tilde{p}(x,t\,|\,v_j) \,{\rm d}x = 1$. \par
Figures \ref{pv1} and \ref{pv1pos} show some plots of 
$\Tilde{p}(x,t\,|\,v_1)$ for different time instants and velocities $(v_1,v_2)$. In both figures, one can  observe that the density possesses   a discontinuity. More specifically, being $V(0)=v_1$, when $v_2<0<v_1$ the discontinuity (having size $\frac{\xi}{v_1}$) occurs for $x=0$ (cf.\ Figure \ref{pv1}), since when the particle has no velocity changes after the last reset it only moves along positive values of $x$. Analogously, when $0<v_2<v_1$  the discontinuity (having size 
$\frac{\lambda e^{-\xi t}}{(v_2-v_1)(1+\lambda t)}$)
occurs for $x=v_2t$ (cf.\ Figure \ref{pv1pos}) as the particle moves in the interval $(0,v_2t)$ only in the  presence of resets.
\begin{figure}[!h]
    \centering
    \includegraphics[scale=0.63]{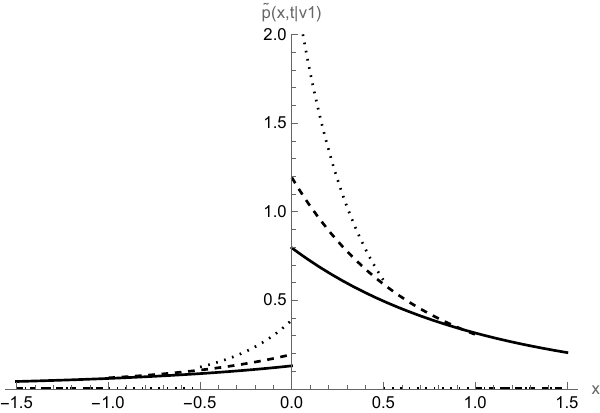}
    \includegraphics[scale=0.63]{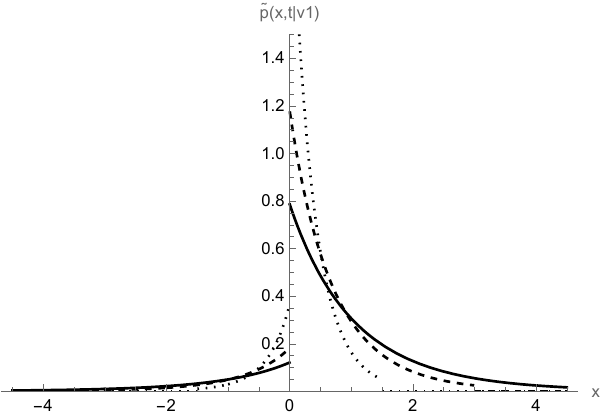}
    \caption{Plots of $\Tilde{p}(x,t\,|\,v_1)$, given in (\ref{eq:37}), for $\xi=2$, $\lambda=1$, $(v_1,v_2)=(1,-1)$ (dotted), $(2,-2)$ (dashed), $(3,-3)$ (solid) and $t=0.5$ (on the left), $1.5$ (on the right).}
  \label{pv1}
\end{figure}

\begin{figure}
    \centering
    \includegraphics[scale=0.63]{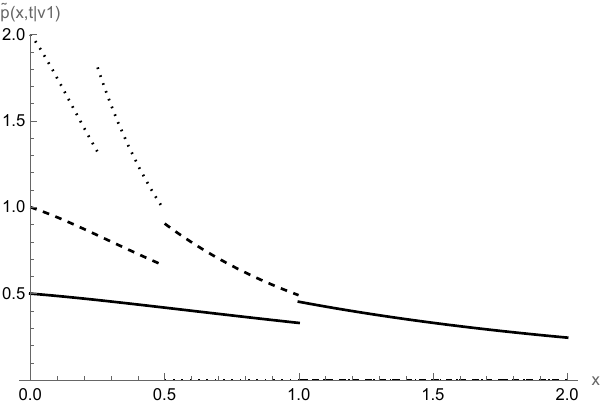}
    \includegraphics[scale=0.63]{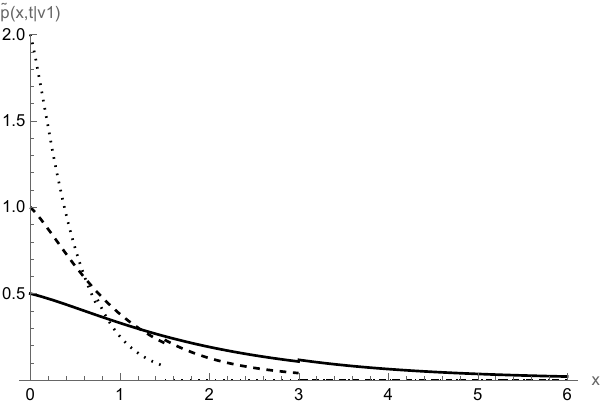}
      \caption{Plots of $\Tilde{p}(x,t\,|\,v_1)$, given in (\ref{eq:37}), for $\xi=2$, $\lambda=1$, $(v_1,v_2)=(1,0.5)$ (dotted), $(2,1)$ (dashed), $(4,2)$ (solid) and $t=0.5$ (on the left), $1.5$ (on the right).}
      
    \label{pv1pos}
\end{figure}
\subsection{Asymptotic results}\label{subsec:limits}

In this section we provide some limit results for the PDF of the process $\Tilde{X}(t)$ conditional on the initial velocity. 
\par
We remark, as recalled in Section \ref{sec:2}, that for a GCP with  intensity $\lambda$, this  parameter represents the mean frequency of occurrence of events.  Let us now discuss the behavior of density (\ref{eq:37}) when the parameter $\lambda$ goes to $+\infty$. 
For the classical telegraph process with velocity changes driven by the Poisson process it is well known that an asymptotic limit holds under the Kac's conditions, which involve both intensity and velocity, leading to the Gaussian transition function of the Brownian motion (see, e.g., Lemma 2 of Orsingher \cite{Orsingher1990}). In the forthcoming Corollary we obtain a quite different result for the process $\Tilde{X}(t)$. 

%
\begin{corollary}\label{corollary:4.2}
	Under the assumptions of Theorem \ref{theorem:4.2}, for 
	$t \in \mathbb{R}^+$, $v_2 t <x <v_1 t$ and $j=1,2$ one has
	\begin{equation}
		\pi(x,t):=
		\lim_{\lambda \to +\infty} \Tilde{p}(x,t\,|\,v_j)={\mathbbm{1}}_{\{ v_2 t < x < v_1 t\}}\frac{e^{-\xi t}}{(v_1-v_2)t}+\boldsymbol{I}(x,t)\frac{\xi }{(v_1-v_2)}\, \Gamma^{\xi}(x,t),
		\label{eq:39}
	\end{equation}
	where $\boldsymbol{I}(x,t)$ is defined in (\ref{eq:24}), and 
	\begin{equation}
		\Gamma^{\xi}(x,t)=\begin{cases}
			\Gamma\Big[0, M_x\, \xi, t\,\xi\Big], \qquad \ {\rm if}\ v_2 < 0 < v_1,\\[1.5ex]
			\Gamma\Big[0,\frac{x}{v_1}\xi, m_{x,t}\, \xi\Big], \quad \ \ {\rm if}\ 0 < v_2 < v_1,\\
		\end{cases} 
		\label{eq:40}
	\end{equation}
	is obtained from Eq.\ (\ref{eq:19}) as $\lambda \to +\infty$, with 
	$M_x$ and $m_{x,t}$ defined in (\ref{eq:20}).
\end{corollary}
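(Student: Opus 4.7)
The plan is to take the limit $\lambda \to +\infty$ term by term in the expression (\ref{eq:37}) for $\Tilde{p}(x,t\,|\,v_j)$, since the four summands there are essentially independent in $\lambda$ and each has a transparent asymptotic behavior. Fix $t\in\mathbb{R}^+$ and $x\in(v_2t,v_1t)$.

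First, I would dispose of the two terms that vanish in the limit. The Dirac component $\frac{e^{-\xi t}\delta(x-v_jt)}{1+\lambda t}$ contributes nothing on the open interval $(v_2t,v_1t)$ since $x\neq v_j t$, and the prefactor $(1+\lambda t)^{-1}\to 0$ anyway. The term $\mathrm{sgn}(v_j)\mathbbm{1}_{\{0<x/v_j<t\}}\frac{\xi e^{-\xi x/v_j}}{v_j+\lambda x}$ vanishes because its denominator grows linearly in $\lambda$ while the numerator stays bounded.

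Next, I would handle the constant-in-$x$ piece: dividing numerator and denominator of the third term by $\lambda$ gives
\begin{equation*}
\mathbbm{1}_{\{v_2 t<x<v_1 t\}}\,\frac{e^{-\xi t}\lambda}{(v_1-v_2)(1+\lambda t)}
\;\longrightarrow\;\mathbbm{1}_{\{v_2 t<x<v_1 t\}}\,\frac{e^{-\xi t}}{(v_1-v_2)t},
\end{equation*}
which is the first piece appearing in (\ref{eq:39}).

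The main (and only real) obstacle is the fourth term $\boldsymbol{I}(x,t)\frac{\xi e^{\xi/\lambda}}{v_1-v_2}\,\Gamma_\lambda^\xi(x,t)$. The factor $e^{\xi/\lambda}\to 1$ trivially, so the work lies in showing $\Gamma_\lambda^\xi(x,t)\to \Gamma^\xi(x,t)$ as defined in (\ref{eq:40}). I would invoke continuity of the map $(z_0,z_1)\mapsto \Gamma(0,z_0,z_1)=\int_{z_0}^{z_1}s^{-1}e^{-s}\,\mathrm{d}s$ in both endpoints on $(0,\infty)\times(0,\infty)$. In the case $v_2<0<v_1$, the endpoints $(M_x+\frac{1}{\lambda})\xi$ and $(t+\frac{1}{\lambda})\xi$ of the defining integral in (\ref{eq:19}) converge respectively to $M_x\xi$ and $t\xi$; the lower endpoint remains strictly positive on the support of $\boldsymbol{I}(x,t)$ except at the single point $x=0$ (where $M_x=0$), so continuity applies for all other $x$ and the integrand $s^{-1}e^{-s}$ is bounded on the shrinking compact perturbations of $[M_x\xi,t\xi]$. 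The case $0<v_2<v_1$ is identical, with endpoints $(\frac{x}{v_1}+\frac{1}{\lambda})\xi\to \frac{x}{v_1}\xi>0$ and $(m_{x,t}+\frac{1}{\lambda})\xi\to m_{x,t}\xi$. Combining the four limits yields (\ref{eq:39}). If a quantitative justification is needed, I would sandwich $\Gamma_\lambda^\xi(x,t)$ between the integrals over $[M_x\xi,t\xi]$ and $[(M_x+\frac{1}{\lambda})\xi,(t+\frac{1}{\lambda})\xi]$ using monotonicity of the bounds and bound the two thin end-slices explicitly by the maximum of $s^{-1}e^{-s}$ on each, which is $O(1/\lambda)$.
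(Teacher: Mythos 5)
Your proposal is correct and follows exactly the route the paper intends: the corollary is stated without an explicit proof precisely because it follows by taking the limit $\lambda\to+\infty$ termwise in Eq.~(\ref{eq:37}), which is what you do, with appropriate care about the vanishing Dirac and reset-singularity terms, the elementary limit $\lambda/(1+\lambda t)\to 1/t$, and the continuity of $(z_0,z_1)\mapsto\Gamma(0,z_0,z_1)$ away from $z_0=0$. Your remark that the lower endpoint degenerates only at the single point $x=0$ (where both sides diverge) is a useful precision that the paper leaves implicit.
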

It is worth mentioning that the limit density obtained in Eq.\ (\ref{eq:39}) does not depend on $j$, so that it is appropriately denoted as $\pi(x,t)$. 
This is in agreement with the remark that a large number of velocity changes relaxes the conditioning on the initial velocity. 
\par
Moreover, the two terms in the right-hand-side of (\ref{eq:39}) reflect the dependence on $\xi$. Indeed, for $\xi=0$ no reset occurs and a uniform distribution arises, in agreement with analogous results obtained in \cite{di2023some}. Instead, for $\xi>0$ the presence of resets yields a non-constant limiting density. 
\par
From Eq.\ (\ref{eq:39}) we have that if $v_2 < 0 < v_1$ then $\pi(x,t)$ is 
increasing for $x\in (v_2 t,0)$ and  decreasing for $x\in (0, v_1 t)$, so that it is unimodal 
in $x=0$ with 
$$
\pi(0,t)=\frac{1}{v_1-v_2}\left[\frac{e^{-\xi t}}{t}+\xi \,\Gamma\left(0,0,\frac{\xi}{t}\right) \right].
$$  
In the special case $v_1=-v_2>0$, from (\ref{eq:20}) one has $M_x=|x|/v_1$, so that in this case $\pi(x,t)$ is an even PDF.
Moreover, from Eq.\ (\ref{eq:39}) we have that, if $0<v_2<v_1$ then the  density $\pi(x,t)$ is 
decreasing for $x\in (0,v_2 t)$, is discontinuous (with an upward jump) in $x=v_2 t$, and 
is still decreasing for $x\in (v_2 t,v_1 t)$.  
Indeed, in this case, the probability mass in $(0,v_2 t)$ only involves the contributions due to resets, whereas the probability mass in $(v_2 t,v_1 t)$ stems from both contributions due to resets and due to diffusion. 
The above remarks are confirmed by some plots of density $\pi(x,t)$ 
shown in Figures \ref{PL} and \ref{PL_pos}
for different values of $\xi$, $t$ and $(v_1,v_2)$. 
In Figure \ref{PL} one can note also that, when the two velocities have opposite signs, the probability mass around zero increases as  $\xi$ increases. This is due to the fact that the particle is more likely to be found near the origin as the  reset rate increases. 
\par
We note that results on the first and second moment of the PDF (\ref{eq:39}) are provided  in  Remark \ref{lambdainf}, in Section \ref{sec:5}. 
\par
One of the most interesting problems in the analysis of finite-velocity random motions is 
the determination of the stationary density. Indeed, even if it does not exist for 
the classical (integrated) telegraph process, it can be obtained for suitable modified versions of such a process. We recall, for instance, Section 2 of Dhar et al.\ \cite{dhar2019} for the case of a particle
moving in an arbitrary confining potential and subjected to the telegraphic noise, and Section 4 of Crimaldi et al.\ \cite{crimaldi2013generalized} 
for the logistic steady-state density of the damped telegraph process with velocity changes governed by Bernoulli trials. 
\par
Let us now analyse the density (\ref{eq:37}) when the time $t$ goes to $+\infty$. 
Differently from the process with no resetting described in (\ref{eq:6}), which has no stationary state when $t\to +\infty$ (see  Eq.\ (67) of \cite{di2023some}), hereafter we see that the process $\Tilde{X}(t)$ admits a stationary PDF, denoted as $\Tilde{p}(x\,|\,v_j)$. 
\begin{corollary}\label{corollary:4.3}
	Under the assumptions of Theorem \ref{theorem:4.2}, 
	for $x \in \mathbb{R}$ and $j=1,2$ one has
	\begin{eqnarray}
		&& \hspace{-0.1cm} \Tilde{p}(x\,|\,v_j)
		:=\lim_{t \to +\infty} \Tilde{p}(x,t\,|\,v_j)
		\nonumber 
		\\
		&& =\left\{
		\begin{array}{ll}
			{\rm sgn}(v_j){\mathbbm{1}}_{\{\frac{x}{v_j} >0\}}
			\displaystyle\frac{\xi \, e^{-\xi \frac{x}{v_j}}}{v_j +\lambda x} 
			+\frac{\xi e^{\frac{\xi}{\lambda}}}{v_1-v_2}\Gamma\Big[0, \Big(M_x +\frac{1}{\lambda}\Big) \xi\Big], & {\rm if}\ v_2 < 0 < v_1,
			\\[3.5mm]
			{\mathbbm{1}}_{\{x>0\}}\left\{
			\displaystyle\frac{\xi \, e^{-\xi \frac{x}{v_j}}}{v_j +\lambda x} 
			+\frac{\xi e^{\frac{\xi}{\lambda}}}{v_1-v_2}\Gamma\Big[0, \Big(\frac{x}{v_1}+\frac{1}{\lambda}\Big) \xi, \Big(\frac{x}{v_2}+\frac{1}{\lambda}\Big)\xi\Big]\right\}, & {\rm if}\  0 < v_2 < v_1,
		\end{array}
		\right.
		\label{eq:41}
	\end{eqnarray}
	where $M_x$ is defined in (\ref{eq:20}).
\end{corollary}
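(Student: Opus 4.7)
The strategy is to take the limit $t \to +\infty$ of the PDF given in Eq.\ (\ref{eq:37}) term by term, noting that three of the four terms carry explicit $t$-dependence that forces them either to vanish or to stabilize at their asymptotic form. I will treat the two cases $v_2 < 0 < v_1$ and $0 < v_2 < v_1$ in parallel, since the only differences arise from the expressions of $\boldsymbol{I}(x,t)$ in (\ref{eq:24}) and of $\Gamma_{\lambda}^{\xi}(x,t)$ in (\ref{eq:19}).

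First, I would fix $x \in \mathbb{R}$ and inspect each of the four summands of (\ref{eq:37}). The first term $\frac{e^{-\xi t}\delta(x-v_j t)}{1+\lambda t}$ is a Dirac mass located at $x=v_j t$; for any fixed $x$ it is zero for all sufficiently large $t$ (since $|v_j t| \to +\infty$), and in the distributional sense the prefactor $e^{-\xi t}/(1+\lambda t)$ also tends to $0$. The third term $\mathbbm{1}_{\{v_2 t < x < v_1 t\}}\frac{e^{-\xi t}\lambda}{(v_1-v_2)(1+\lambda t)}$ vanishes uniformly because the factor $e^{-\xi t}/(1+\lambda t) \to 0$. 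The second term is already $t$-independent up to the truncation $\mathbbm{1}_{\{0 < x/v_j < t\}}$, which as $t \to +\infty$ converges pointwise to $\mathbbm{1}_{\{x/v_j > 0\}}$, producing exactly the first summand of each branch in (\ref{eq:41}).

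The only delicate term is the fourth, $\boldsymbol{I}(x,t)\frac{\xi e^{\xi/\lambda}}{v_1-v_2}\,\Gamma_{\lambda}^{\xi}(x,t)$. In the case $v_2 < 0 < v_1$ the indicator $\boldsymbol{I}(x,t)=\mathbbm{1}_{\{v_2 t < x < v_1 t\}}$ converges to $1$ for every $x \in \mathbb{R}$, while $M_x=\max\{x/v_1,x/v_2\}$ is independent of $t$, so only the upper endpoint of the generalized incomplete Gamma function moves: $(t+1/\lambda)\xi \to +\infty$. By the definition (\ref{eq:GIGF}) and integrability of $e^{-u}/u$-type integrands at infinity, $\Gamma\bigl[0,(M_x+1/\lambda)\xi,(t+1/\lambda)\xi\bigr]$ converges to the standard incomplete Gamma $\Gamma\bigl[0,(M_x+1/\lambda)\xi\bigr]$, yielding the first branch of (\ref{eq:41}). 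In the case $0 < v_2 < v_1$ the indicator $\mathbbm{1}_{\{0 < x < v_1 t\}}$ tends to $\mathbbm{1}_{\{x > 0\}}$, and for any fixed $x>0$ one eventually has $m_{x,t}=\min\{x/v_2,t\} = x/v_2$, so $\Gamma_{\lambda}^{\xi}(x,t)$ stabilizes at $\Gamma[0,(x/v_1+1/\lambda)\xi,(x/v_2+1/\lambda)\xi]$, matching the second branch of (\ref{eq:41}).

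The main (minor) obstacle is justifying convergence of the generalized incomplete Gamma function in the case $v_2 < 0 < v_1$ as the upper endpoint tends to $+\infty$; this reduces to observing that the integrand $u^{-1}e^{-u}$ is integrable at $+\infty$, so the limit is simply the ordinary upper incomplete Gamma function. Combining the four limits and absorbing $\mathrm{sgn}(v_j)\mathbbm{1}_{\{x/v_j>0\}}$ into $\mathbbm{1}_{\{x>0\}}$ when both velocities are positive completes the derivation of (\ref{eq:41}).
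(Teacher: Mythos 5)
Your proposal is correct and follows exactly the route the paper intends: the corollary is obtained by letting $t\to+\infty$ term by term in Eq.~(\ref{eq:37}), with the two $e^{-\xi t}$-weighted terms vanishing, the truncated indicators stabilizing, and the generalized incomplete Gamma function converging to its limiting form in each of the two velocity regimes. Your explicit justification of the convergence of $\Gamma[0,\cdot,(t+1/\lambda)\xi]$ via integrability of $u^{-1}e^{-u}$ at infinity is a welcome detail the paper leaves implicit.
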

\begin{figure}[ht!]
	\centering
	\includegraphics[scale=0.51]{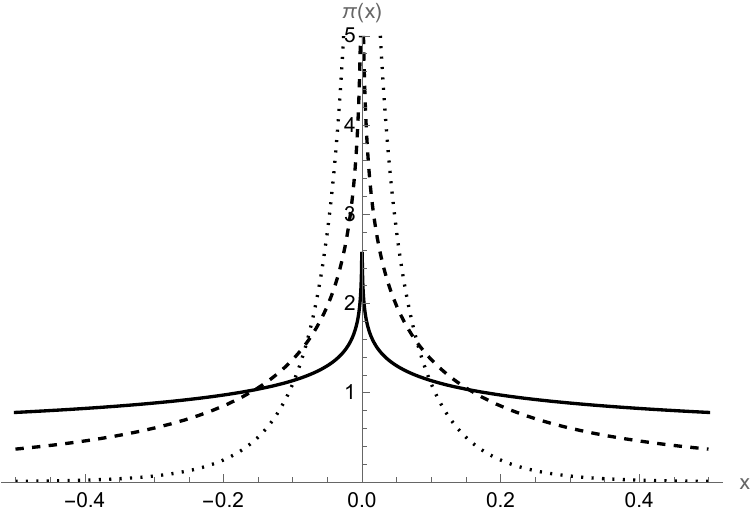}
	\includegraphics[scale=0.51]{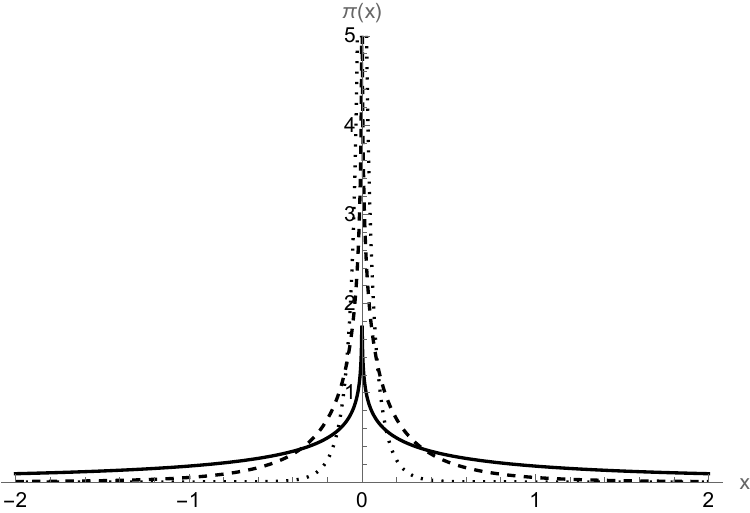}
	\caption{Plots of $\pi(x,t)$, given in (\ref{eq:39}), for $v_1=1$ and $v_2=-1$, 
		with $\xi=0.5$ (solid), $1$  (dashed), $10$ (dotted), and $t=0.5$ (left), $2$ (right).}
	\label{PL}
\end{figure}
\begin{figure}[ht!]
	\centering\includegraphics[scale=0.51]{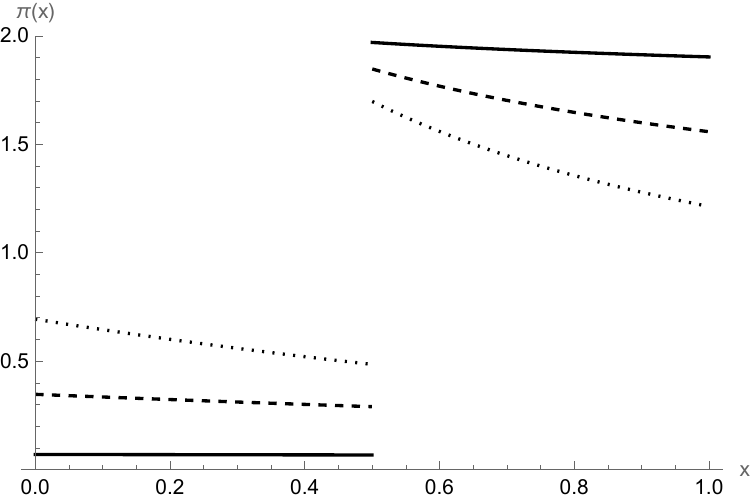}
	\includegraphics[scale=0.51]{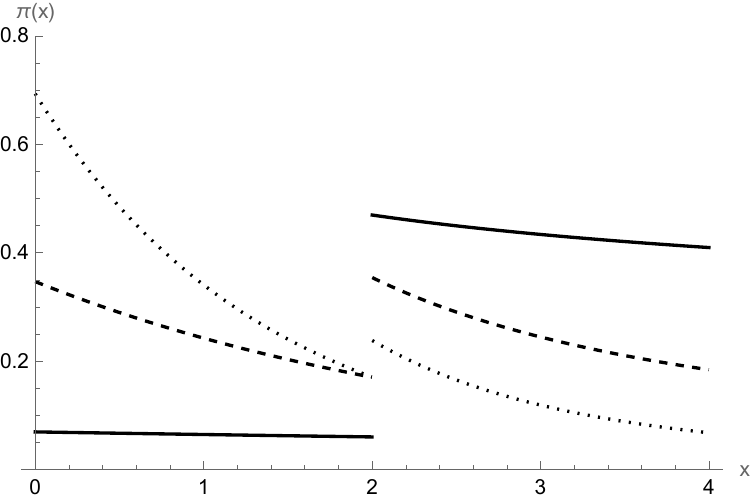}
	\caption{Plots of $\pi(x,t)$, given in (\ref{eq:39}), 
		for $v_1=2$ and $v_2=1$, with $\xi=0.5$ (solid), $1$ (dashed), $10$ (dotted), and $t=0.5$   (left), $2$ (right).}
	\label{PL_pos}
\end{figure}
\begin{figure}[ht!]
	\centering
	\includegraphics[scale=0.51]{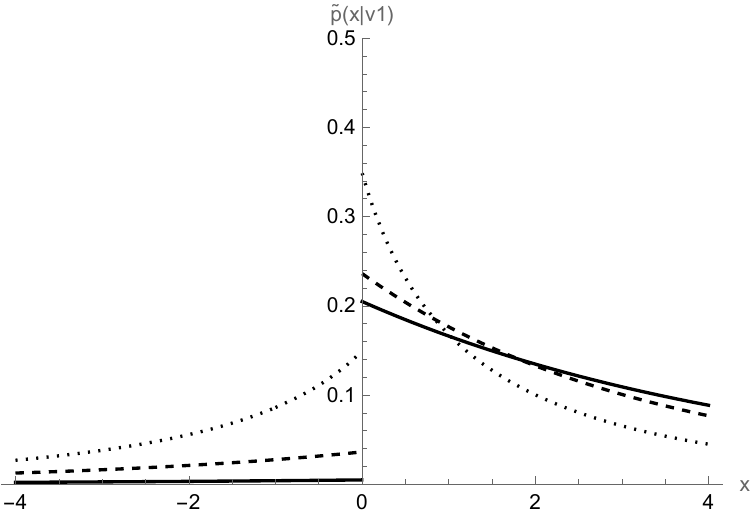}
	\includegraphics[scale=0.51]{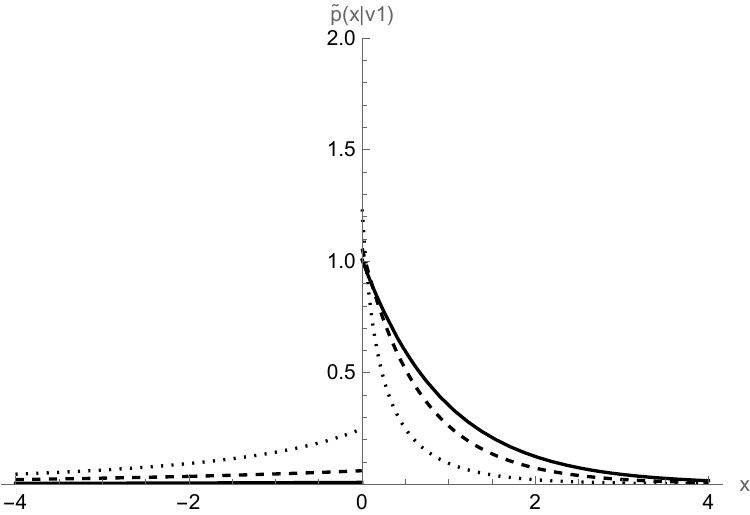}
	\includegraphics[scale=0.51]{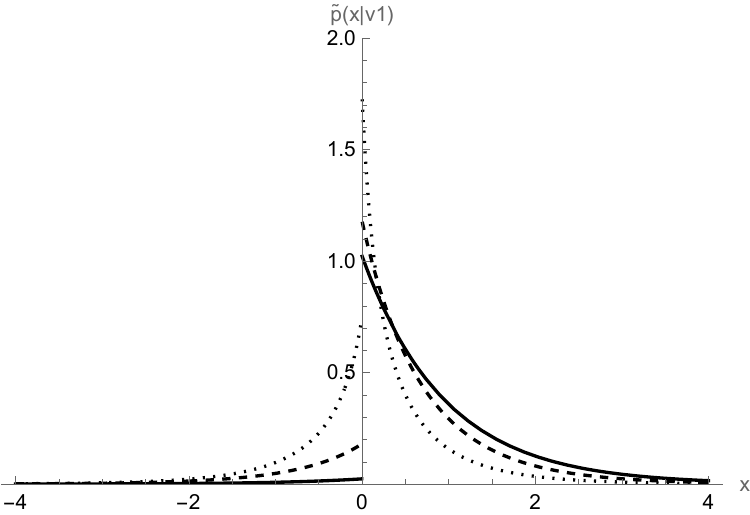}
	\includegraphics[scale=0.51]{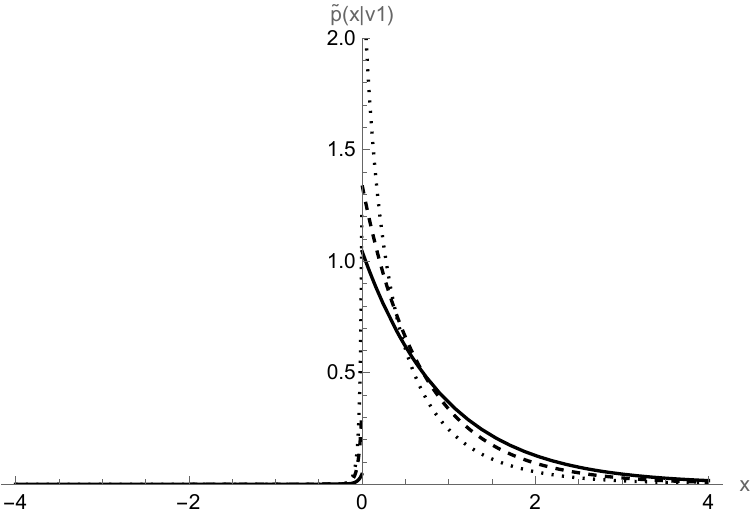}
	\caption{Plots of $\Tilde{p}(x\,|\,v_1)$, given in (\ref{eq:41}), for $\xi=2$, 
		with $\lambda=0.1$  (solid), $1$ (dashed), $10$  (dotted) and $(v_1,v_2)=(10,-10),(2,-10),(2,-2),(2,-0.1)$ (from left to right and top to bottom).}
	\label{PT_v2neg}
\end{figure}
\begin{figure}[ht!]
	\centering
	\includegraphics[scale=0.51]{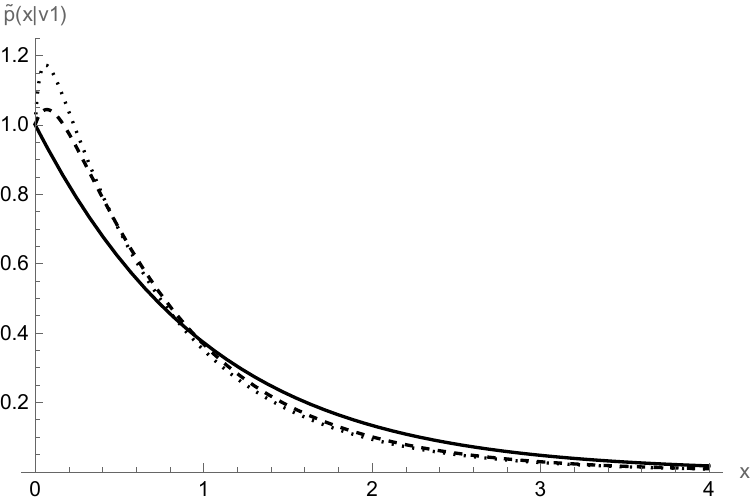}
	\includegraphics[scale=0.51]{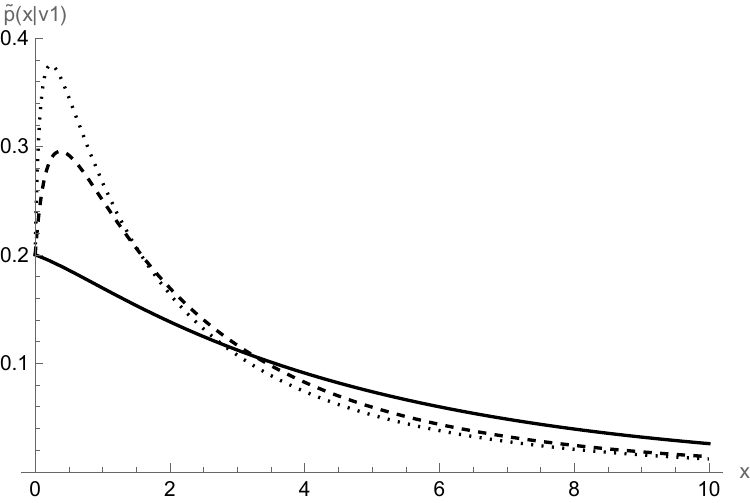}
	\caption{Plots of $\Tilde{p}(x\,|\,v_1)$, given in (\ref{eq:41}), for $\xi=2$, 
		with $\lambda=0.1$ (solid), $5$ (dashed), $20$ (dotted), $v_2=1$ and $v_1=2$ (left), $10$ (right).}
	\label{PT_v2pos}
\end{figure}
\par
Figure \ref{PT_v2neg} shows some plots of $\Tilde{p}(x\,|\,v_1)$ for different values of $\lambda$ and $(v_1,v_2)$, with $v_2<0<v_1$. It is noticeable that the probability mass in $(-\infty,0)$ decreases as $v_2$ tends to zero. We see from Figure \ref{PT_v2neg} that the limiting density $\Tilde{p}(x\,|\,v_1)$ exhibits a discontinuity in $x=0$, and that the probability mass is higher in $(0, \infty)$, since the initial velocity and the velocity after any reset is positive, i.e.\ $v_1>0$. 
The discontinuity in $x=0$ is not unusual in the field of finite-velocity random motions. Indeed, a similar behavior is found in the state-dependent telegraph process with Gamma-type intensity functions (see Section 4 of Di Crescenzo and Travaglino, 2019, \cite{di2019probabilistic}). 
On the other hand, in Figure \ref{PT_v2pos} there are some plots of $\Tilde{p}(x\,|\,v_1)$ for different values of $\lambda$ and $(v_1,v_2)$, with $0<v_2<v_1$. Here, one can observe that the density is unimodal, and has a positive maximum for higher values of $\lambda$.
\par
We finally note that results on the first and second moment of the PDF (\ref{eq:41}) are given in the next section, in point (iii) of Remark \ref{remark:5.3}. 

\subsection{Random initial velocity}\label{subsec:random}

In this section, we consider the probability distribution of $\Tilde{X}(t)$ when the initial velocity is random, according to the random variable $V_0$ such that, for $0\leq q\leq 1$,  
\begin{equation}
	{\rm P}\{V_0=v_j\}=
	\begin{cases}
		q, \qquad \ \ \, {\rm if}\  j=1,\\
		1-q, \quad {\rm if}\ \,  j=2.
	\end{cases}
	\label{eq:43}
\end{equation}
For  $t\in \mathbb{R}_0^+$, $v_2t<x<v_1t$ and $j=1,2$, we define the PDF
\begin{equation} 
	\Tilde{p}_j(x,t)\,{\rm d}x ={\rm P}[\Tilde{X}(t)\in {\rm d}x, V(t)=v_j\,|\,X(0)=0, V(0)\stackrel{d}{=}V_0],
	\label{eq:44}
\end{equation}
where `$\stackrel{d}{=}$' means equality in distribution. 
By conditioning on $V_0$, in this case we have 
\begin{equation}
	\Tilde{p}(x,t)=\frac{1}{{\rm d}x}{\rm P}[\Tilde{X}(t) \in {\rm d}x\,|\,X(0)=0, V(0)\stackrel{d}{=}V_0] 
	= q \, \Tilde{p}(x, t\, |\, v_1) + (1-q)\, \Tilde{p}(x, t\, | \, v_2).
	\label{eq:45}
\end{equation}
Making use of Eq.\ (\ref{eq:37}), we can state the following result on the distribution of $\Tilde{X}(t)$ when the initial velocity is random.
\begin{theorem}\label{proposition:4.1}
	Let the initial velocity of the process $\{(\Tilde{X}(t),V(t)), t \geq 0\}$ be distributed as 
	in Eq.\ (\ref{eq:43}). Then, for $t \in \mathbb{R}^+$, $v_2 t < x < v_1 t$, we have
	\begin{equation}
		\begin{aligned}
			\Tilde{p}(x,t)&=\Big[qH_1(x,t)+(1-q)H_2(x,t)\Big]+{\mathbbm{1}}_{\{v_2 t < x < v_1 t\}}\frac{\lambda e^{-\xi t}}{(v_1-v_2)(1+\lambda t)}\\
			&+\frac{\xi e^{\frac{\xi}{\lambda}}}{(v_1-v_2)}\boldsymbol{I}(x,t)\, \Gamma_{\lambda}^{\xi}(x,t),
		\end{aligned}
		\label{eq:46}
	\end{equation}
	with $\Gamma_{\lambda}^{\xi}(x,t)$ and $\boldsymbol{I}(x,t)$ defined respectively in (\ref{eq:19}) and (\ref{eq:24}), and 
	\begin{equation*}
		H_j(x,t)=\frac{e^{-\xi t}\,\delta(x-v_jt)}{1+\lambda t}+{\rm sgn}(v_j){\mathbbm{1}}_{\{0 < \frac{x}{v_j} < t\}}\frac{\xi e^{-\xi \frac{x}{v_j}}}{v_j+\lambda x},
		\qquad j=1,2.
	\end{equation*}
\end{theorem}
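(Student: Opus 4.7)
The plan is to apply the conditioning formula (\ref{eq:45}) directly to the closed-form expression (\ref{eq:37}) from Theorem \ref{theorem:4.2}. The whole argument is a bookkeeping step: write
$$
\Tilde{p}(x,t) = q\,\Tilde{p}(x,t\,|\,v_1) + (1-q)\,\Tilde{p}(x,t\,|\,v_2),
$$
and substitute (\ref{eq:37}) on both right-hand summands.

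The key observation is that the four terms appearing in (\ref{eq:37}) split naturally according to whether they depend on the initial velocity index $j$ or not. The first two terms, namely the delta at $v_j t$ and the single-reset contribution $\operatorname{sgn}(v_j)\mathbbm{1}_{\{0<x/v_j<t\}}\,\xi e^{-\xi x/v_j}/(v_j+\lambda x)$, depend explicitly on $j$; these are exactly what has been bundled into the functions $H_j(x,t)$ in the statement. Thus the mixture of the first two terms of (\ref{eq:37}) across $j=1,2$ produces $qH_1(x,t)+(1-q)H_2(x,t)$, which is the first bracketed term of (\ref{eq:46}).

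The remaining two terms of (\ref{eq:37}), namely $\mathbbm{1}_{\{v_2t<x<v_1t\}}\,\lambda e^{-\xi t}/[(v_1-v_2)(1+\lambda t)]$ and $\boldsymbol{I}(x,t)\,\xi e^{\xi/\lambda}\,\Gamma_\lambda^\xi(x,t)/(v_1-v_2)$, are \emph{independent} of $j$: inspecting (\ref{eq:19}), (\ref{eq:20}) and (\ref{eq:24}) one sees that $\Gamma_\lambda^\xi(x,t)$, $\boldsymbol{I}(x,t)$, $M_x$ and $m_{x,t}$ are defined in terms of $x,t,v_1,v_2,\lambda,\xi$ only. Consequently, in the mixture these terms carry the prefactor $q+(1-q)=1$ and pass through unchanged, yielding the last two summands of (\ref{eq:46}).

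There is no genuine obstacle; the only point requiring care is the verification that the diffusive-survival term and the repeated-reset term inherit no $j$-dependence, which is immediate from the definitions. Combining the two groups finishes the derivation of (\ref{eq:46}).
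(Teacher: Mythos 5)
Your proposal is correct and coincides with the paper's own (implicit) derivation: the paper also obtains Eq.\ (\ref{eq:46}) by substituting Eq.\ (\ref{eq:37}) into the mixture formula (\ref{eq:45}), grouping the two $j$-dependent terms into $H_j(x,t)$ and noting that the remaining terms are independent of $j$ so their coefficients sum to $q+(1-q)=1$. No gaps.
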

An analogous result for the flow function of $\Tilde{X}(t)$, when the initial velocity is random, 
can be obtained similarly from Eq.\ (\ref{eq:38}).
\par
In the next two corollaries, making use of Eq.\ (\ref{eq:46}), we study the asymptotic behavior of density $\Tilde{p}(x,t)$ when the intensity $\lambda$  and the time $t$ tend to $+\infty$. 
\begin{corollary}\label{corollary:4.4}
	Let the assumptions of Theorem \ref{proposition:4.1} hold. For $t \in \mathbb{R}^+$ and $v_2 t < x < v_1 t$ one has
	\begin{equation}
		\lim_{\lambda \to +\infty} \Tilde{p}(x,t)={\mathbbm{1}}_{\{v_2 t < x < v_1 t\}}\frac{e^{-\xi t}}{(v_1-v_2)t}+\frac{\xi}{(v_1-v_2)}\boldsymbol{I}(x,t)\, \Gamma^{\xi}(x,t),
		\label{eq:48}
	\end{equation}
	where $\boldsymbol{I}(x,t)$ and $\Gamma^{\xi}(x,t)$ are defined in (\ref{eq:24}) and (\ref{eq:40}), respectively.
\end{corollary}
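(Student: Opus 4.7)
The plan is to exploit the mixture representation (\ref{eq:45}) together with Corollary \ref{corollary:4.2}, bypassing any direct manipulation of the decomposition (\ref{eq:46}). Indeed, by (\ref{eq:45}), $\Tilde{p}(x,t) = q\,\Tilde{p}(x,t\,|\,v_1) + (1-q)\,\Tilde{p}(x,t\,|\,v_2)$ is a convex combination of two quantities that by Corollary \ref{corollary:4.2} share the common limit $\pi(x,t)$ as $\lambda \to +\infty$, independent of the initial velocity. Hence the mixture collapses, giving $\lim_{\lambda \to +\infty} \Tilde{p}(x,t) = q\,\pi(x,t) + (1-q)\,\pi(x,t) = \pi(x,t)$, and substituting the explicit form (\ref{eq:39}) of $\pi(x,t)$ yields precisely the right-hand side of (\ref{eq:48}).

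For completeness I would also sketch a more computational alternative, working directly from (\ref{eq:46}) by taking the limit of each summand. First, the Dirac contributions inside $H_1,H_2$ carry coefficients $e^{-\xi t}/(1+\lambda t) \to 0$, and the absolutely continuous pieces of $H_j$ decay pointwise like $1/\lambda$, so the whole bracket $qH_1+(1-q)H_2$ disappears. Second, the middle term satisfies $\frac{\lambda e^{-\xi t}}{(v_1-v_2)(1+\lambda t)} \to \frac{e^{-\xi t}}{(v_1-v_2)t}$, producing the first summand of (\ref{eq:48}). Third, the prefactor $e^{\xi/\lambda}\to 1$, and continuity of the generalized incomplete Gamma function in its integration limits (which differ from those of $\Gamma^{\xi}$ only by the vanishing shifts $1/\lambda$) gives $\Gamma_{\lambda}^{\xi}(x,t) \to \Gamma^{\xi}(x,t)$ for every $x$ at which the lower endpoint remains strictly positive.

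The main delicacy in the direct approach is the endpoint $x=0$ in the case $v_2<0<v_1$: there $M_x \to 0$ and $\Gamma(0,0,t\xi)$ diverges as $\lambda \to +\infty$. However, this singles out only one point of the real line and so does not affect the equality of PDFs asserted in (\ref{eq:48}), which is to be interpreted almost everywhere. The mixture-based argument avoids this issue entirely, as the difficulty has already been handled in the proof of Corollary \ref{corollary:4.2}, which is why I would present the proof by invoking (\ref{eq:45}) and Corollary \ref{corollary:4.2} and reduce the whole statement to a short conclusion.
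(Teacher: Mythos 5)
Your proposal is correct. Your primary argument takes a slightly different route from the one the paper indicates: the text introduces Corollaries \ref{corollary:4.4} and \ref{corollary:4.5} as being obtained ``making use of Eq.\ (\ref{eq:46})'', i.e.\ by passing to the limit term by term in the decomposition of $\Tilde{p}(x,t)$, which is exactly your second, computational sketch (the bracket $qH_1+(1-q)H_2$ vanishes, the middle term tends to $e^{-\xi t}/((v_1-v_2)t)$, and $e^{\xi/\lambda}\Gamma_{\lambda}^{\xi}\to\Gamma^{\xi}$). Your first argument instead feeds Corollary \ref{corollary:4.2} into the mixture representation (\ref{eq:45}): since both conditional densities converge to the same $\pi(x,t)$ independently of the initial velocity, the convex combination collapses and the limit is manifestly independent of $q$. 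This is cleaner and makes the $q$-independence of (\ref{eq:48}) conceptually transparent, at the price of leaning entirely on Corollary \ref{corollary:4.2}; the direct computation buys an independent consistency check of the decomposition (\ref{eq:46}). Your side remark about $x=0$ is also well taken: for $v_2<0<v_1$ the function $\Gamma(0,0,t\xi)$ indeed diverges (logarithmically, hence integrably), so the limit density has a singularity at the origin and the identity (\ref{eq:48}) should be read pointwise for $x\neq 0$, which the mixture argument sidesteps since the issue is already absorbed in Corollary \ref{corollary:4.2}.
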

The right-hand side of (\ref{eq:48}) shows that the distribution of the process $\Tilde{X}(t)$ when the initial velocity is random is not asymptotically uniformly distributed. Whereas, if $\xi \to 0$ the limiting PDF in (\ref{eq:48}) tends to be uniform over the diffusion interval $(v_2t,v_1t)$ as shown in Eq.\ (69) of \cite{di2023some}. 
\begin{corollary}\label{corollary:4.5}
	Let the assumptions of Theorem \ref{proposition:4.1} hold. For $t \in \mathbb{R}^+$ and $v_2 t < x < v_1 t$ one has
	\begin{eqnarray}
		&& \hspace{-0.1cm}\lim_{t \to +\infty} \Tilde{p}(x,t)
		\nonumber 
		\\
		&& =\left\{
		\begin{array}{ll}
			\Big[q\overline{H}_1(x)+(1-q)\overline{H}_2(x)\Big] 
			+\frac{\xi e^{\frac{\xi}{\lambda}}}{v_1-v_2}\Gamma\Big[0, \Big(M_x +\frac{1}{\lambda}\Big) \xi\Big], & {\rm if}\ v_2 < 0 < v_1,
			\\[3.5mm]
			\Big[q\overline{H}_1(x)+(1-q)\overline{H}_2(x)\Big]
			+\mathbbm{1}_{\{x>0\}}\frac{\xi e^{\frac{\xi}{\lambda}}}{v_1-v_2}\Gamma\Big[0, \Big(\frac{x}{v_1}+\frac{1}{\lambda}\Big) \xi, \Big(\frac{x}{v_2}+\frac{1}{\lambda}\Big)\xi\Big], & {\rm if}\  0 < v_2 < v_1,
		\end{array}
		\right.
		\nonumber 
		\\
		\label{eq:49}
	\end{eqnarray}
	where 
	\begin{equation*}
		\overline{H}_j(x)={\rm sgn}(v_j){\mathbbm{1}}_{\{\frac{x}{v_j} >0\}}\frac{\xi e^{-\xi \frac{x}{v_j}}}{v_j+\lambda x}, \qquad j=1,2.
	\end{equation*}
\end{corollary}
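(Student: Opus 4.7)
The plan is to compute the limit as $t\to+\infty$ term-by-term in the representation (\ref{eq:46}) given by Theorem \ref{proposition:4.1}. I would view $\Tilde p(x,t)$ as the sum of three blocks: the convex combination $qH_1(x,t)+(1-q)H_2(x,t)$ coming from the two initial-velocity branches, the ``no-reset'' diffusion piece supported on $(v_2 t,v_1 t)$ with the $e^{-\xi t}$ prefactor, and the ``at-least-one-reset'' piece $\xi e^{\xi/\lambda}(v_1-v_2)^{-1}\boldsymbol I(x,t)\,\Gamma_\lambda^\xi(x,t)$. Fixing $x\in\mathbb{R}$, I would pass to the limit in each block separately.

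For $H_j(x,t)$, the atomic part $e^{-\xi t}\delta(x-v_j t)/(1+\lambda t)$ contributes zero in the limit: the support point $v_j t$ leaves every bounded neighbourhood of the fixed $x$, and the prefactor decays exponentially in $t$. The remaining summand ${\rm sgn}(v_j)\,\mathbbm{1}_{\{0<x/v_j<t\}}\,\xi e^{-\xi x/v_j}/(v_j+\lambda x)$ stabilises as soon as $t>x/v_j$ and becomes exactly $\overline H_j(x)$, which produces the leading bracket $q\overline H_1(x)+(1-q)\overline H_2(x)$ in (\ref{eq:49}). The second block, $\mathbbm{1}_{\{v_2 t<x<v_1 t\}}\lambda e^{-\xi t}/[(v_1-v_2)(1+\lambda t)]$, vanishes thanks to the $e^{-\xi t}$ factor, even though the indicator is eventually equal to $1$.

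The substantive step is the limit of $\boldsymbol I(x,t)\,\Gamma_\lambda^\xi(x,t)$, which I would split according to the sign of $v_2$. If $v_2<0<v_1$, then $\boldsymbol I(x,t)=\mathbbm{1}_{\{v_2 t<x<v_1 t\}}\to 1$ for every $x\in\mathbb{R}$, while in the integral representation (\ref{eq:GIGF}) the upper endpoint $(t+1/\lambda)\xi\to+\infty$, so by monotone convergence $\Gamma_\lambda^\xi(x,t)\to \Gamma[0,(M_x+1/\lambda)\xi]$, yielding the first case of (\ref{eq:49}). If $0<v_2<v_1$, then $\boldsymbol I(x,t)=\mathbbm{1}_{\{0<x<v_1 t\}}\to\mathbbm{1}_{\{x>0\}}$, and for any fixed $x>0$ one has $m_{x,t}=\min\{x/v_2,t\}=x/v_2$ as soon as $t\geq x/v_2$, so $\Gamma_\lambda^\xi(x,t)$ stabilises to $\Gamma[0,(x/v_1+1/\lambda)\xi,(x/v_2+1/\lambda)\xi]$. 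Collecting the three contributions then reproduces (\ref{eq:49}).

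No real obstacle is expected: the proof is a routine but careful pointwise limit argument. The only delicate points are the interpretation of the atomic component in $H_j$ as a generalised density whose mass escapes to infinity, and the correct tracking of the indicator $\boldsymbol I(x,t)$ in the two velocity regimes $v_2<0<v_1$ and $0<v_2<v_1$, which reflect whether the origin (the restart point) belongs to the support $(v_2 t,v_1 t)$ of the absolutely continuous component in absence of resets. Since every term in (\ref{eq:46}) is given in closed form, no convergence machinery beyond the monotonicity of the integrand in (\ref{eq:GIGF}) is required.
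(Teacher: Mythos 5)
Your proposal is correct and follows exactly the route the paper intends: the corollary is stated as a direct consequence of Eq.~(\ref{eq:46}), obtained by passing to the limit $t\to+\infty$ term by term, with the atomic and $e^{-\xi t}$-weighted terms vanishing, the indicators stabilising, and $\Gamma_\lambda^\xi(x,t)$ converging to the stated (complete or stabilised incomplete) Gamma expressions in the two velocity regimes. Nothing is missing.
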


As before, it is easy to see that the process $\Tilde{X}(t)$, when the initial velocity is random, admits a stationary state. When $\xi \to 0$, since the effect of resets vanishes,  
the stationary density  obtained in (\ref{eq:49}) does not exist anymore. A similar result holds for the classical telegraph process driven by the Poisson process. Indeed, for this process the stationary density can be obtained under the Poisson-paced reset mechanism (see Eq.\ (27) of Masoliver \cite{masoliver2019telegraphic}) whereas it does not exist in the classical setting.
%

\section{Moment-generating functions and moments}\label{sec:5}

In this section we continue the analysis of the extended telegraph process with underlying GCP's with parameter $\lambda$, subject to resets to the origin with intensity $\xi$. 
In order to further characterize the distribution of  
$\{\Tilde{X}(t), \;t\in \mathbb{R}_0^{+}\}$ conditional on $\{\Tilde{X}(0)=0, V(0)=v_j\}$, 
for $j=1,2$, hereafter we provide the explicit form of th MGF and the moments of $\Tilde{X}(t)$. 
To this aim, we will denote by ${\rm E}_j[\cdot]$ the expectation conditional 
on $\{\Tilde{X}(0) = 0,V(0) = v_j\}$, for $j = 1,2$. Accordingly, 
\begin{equation}
	M_{j}(z,t) = {\rm E}_j\big[e^{z \Tilde{X}(t)} \big], \qquad j=1,2
	\label{eq:Mjzt}
\end{equation}
will denote the conditional MGF of $\Tilde{X}(t)$, $t\in \mathbb{R}_0^{+}$.  
\begin{theorem}\label{theorem:5.1}
	Under the assumption of Theorem \ref{theorem:4.1} for all $t \in \mathbb{R}^+$, $z\in {\cal D}$ and $j=1,2$ we have 
	\begin{equation}
		\begin{aligned}
			M_{j}(z,t) &= \frac{e^{(v_j z - \xi )t}}{1+\lambda t}+\frac{\xi}{\lambda} e^{\frac{\xi-v_j z}{\lambda}}\, G_j(z,t)\\
			&+\frac{\lambda\, e^{-\xi t}\big(e^{v_1 z t}-e^{v_2 z t})}{z(v_1-v_2)(1+\lambda t)} + \frac{\xi}{z(v_1-v_2)} \, \sum_{k=1}^2 (-1)^{k-1}\, e^{\frac{\xi-v_i z}{\lambda}} \, G_k(z,t),
		\end{aligned}
		\label{eq:50}
	\end{equation}
	where 
	$$
	{\cal D} = \left\{
	\begin{array}{ll}
		\displaystyle\left(\frac{\xi}{v_2} , \frac{\xi}{v_1} \right), & \hbox{if }\; v_2 < 0 < v_1, 
  \\[4mm]
		\displaystyle\left(-\infty,\frac{\xi}{v_2}\right), & \hbox{if }\; 0 < v_2 < v_1,
	\end{array}
	\right.
	$$
	and
	\begin{equation*}
		G_j(z,t)=\Gamma\Big[0, \frac{\xi-v_j z}{\lambda}, \frac{(\xi-v_j z)(1+\lambda t)}{\lambda}\Big], 
		\qquad j=1,2.
	\end{equation*}
\end{theorem}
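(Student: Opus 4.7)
The plan is to sidestep the direct integration of the piecewise density (\ref{eq:37}) by lifting the renewal identity (\ref{eq:12}) to the level of moment-generating functions. Multiplying (\ref{eq:12}) by $e^{zx}$, integrating in $x$, and summing over $i=1,2$ (the atom at $v_jt$ obeys the same relation, by the strong Markov property of the Poissonian reset clock), one obtains
\begin{equation*}
M_j(z,t) \;=\; e^{-\xi t}\, M_j^{\star}(z,t) \;+\; \xi \int_0^t e^{-\xi s}\, M_j^{\star}(z,s)\,{\rm d}s,
\end{equation*}
where $M_j^{\star}(z,t) := {\rm E}_j[e^{z X(t)}]$ denotes the MGF of the reset-free process $X(t)$ of Section \ref{sec:2}.

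The next step is to compute $M_j^{\star}$ explicitly. A direct inspection of the matrix $C$ in (\ref{eq:pijX}) gives $c_{1,j}+c_{2,j}=1+\lambda t$ for both $j=1,2$, so the total absolutely continuous density $p_1(x,t\,|\,v_j)+p_2(x,t\,|\,v_j)$ collapses to the constant $\lambda/[(v_1-v_2)(1+\lambda t)]$ on $(v_2t,v_1t)$. Combined with the atom of mass $1/(1+\lambda t)$ at $x=v_jt$ (the probability of no velocity change by time $t$), integration against $e^{zx}$ immediately yields
\begin{equation*}
M_j^{\star}(z,t) \;=\; \frac{e^{v_jzt}}{1+\lambda t} \;+\; \frac{\lambda\,(e^{v_1zt}-e^{v_2zt})}{z(v_1-v_2)(1+\lambda t)}.
\end{equation*}

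Substituting this expression into the renewal formula, the first and third terms of (\ref{eq:50}) emerge at once from $e^{-\xi t}\,M_j^{\star}(z,t)$. The remaining two terms arise from integrals of the common form $\xi\int_0^t e^{(v_kz-\xi)s}/(1+\lambda s)\,{\rm d}s$ for $k\in\{j,1,2\}$. The change of variable $u=(\xi-v_kz)(1+\lambda s)/\lambda$ linearises both the exponential factor and the rational factor simultaneously, transforming each such integral into $(\xi/\lambda)\,e^{(\xi-v_kz)/\lambda}\int_{(\xi-v_kz)/\lambda}^{(\xi-v_kz)(1+\lambda t)/\lambda} e^{-u}/u\,{\rm d}u$, which by the definition (\ref{eq:GIGF}) equals $(\xi/\lambda)\,e^{(\xi-v_kz)/\lambda}\,G_k(z,t)$. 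Collecting the contribution of $k=j$ as the second term of (\ref{eq:50}) and assembling the contributions of $k=1,2$ with the prefactor $\lambda/[z(v_1-v_2)]$ as the fourth term completes the identification.

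The above substitution is legitimate precisely when $\xi-v_kz>0$ for $k=1,2$, so that the integration path of $G_k$ does not cross the singularity of $e^{-u}/u$ at $u=0$; this carves out exactly the domain $\mathcal{D}$ stated in the theorem. The whole argument is essentially mechanical, and the main conceptual step is the first one: recognizing that the renewal relation (\ref{eq:12}) transfers verbatim to moment-generating functions converts the problem from an awkward spatial integration of the piecewise density (\ref{eq:37}) into a single one-dimensional temporal integration of a clean reset-free MGF.
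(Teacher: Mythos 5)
Your proof is correct, and it takes a genuinely different route from the paper's. The paper first assembles the explicit transition density $\Tilde{p}(x,t\,|\,v_j)$ of Theorem \ref{theorem:4.2} and then integrates $e^{zx}$ against it in the spatial variable, which forces it to integrate the generalized incomplete Gamma terms $\Gamma[0,(\frac{x}{v_k}+\frac{1}{\lambda})\xi,(t+\frac{1}{\lambda})\xi]$ over $x$. You instead lift the renewal identity (\ref{eq:12}) to the level of moment-generating functions, exploit the observation that $c_{1,j}+c_{2,j}=1+\lambda t$ makes the reset-free absolutely continuous density uniform on $(v_2t,v_1t)$, and reduce everything to a single temporal integral $\int_0^t e^{(v_kz-\xi)s}/(1+\lambda s)\,{\rm d}s$, which the substitution $u=(\xi-v_kz)(1+\lambda s)/\lambda$ turns into $G_k(z,t)$ directly. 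This is shorter, makes transparent why exactly the three functions $G_j,G_1,G_2$ appear, and avoids re-deriving information already encoded in (\ref{eq:12}); the paper's route has the advantage of being a direct verification against the stated density (\ref{eq:37}). One small caution: your parenthetical that ``the atom obeys the same relation'' is loosely worded --- the atoms of $X(s)$ at $v_js$ smear under the time integral into the absolutely continuous term $\xi e^{-\xi x/v_j}/(v_j+\lambda x)$, so the renewal identity should be invoked for the full law (discrete plus continuous), exactly as the paper does in Eq.\ (\ref{eq:27}); the MGF conclusion is unaffected. Also, your conditions $\xi-v_kz>0$, $k=1,2$, give $z\in(\xi/v_2,\xi/v_1)$ when $v_2<0<v_1$ as claimed, but give $z<\xi/v_1$ (not $z<\xi/v_2$) when $0<v_2<v_1$; the discrepancy with the stated ${\cal D}$ appears to be an imprecision in the theorem statement itself rather than in your argument, since $\Tilde{X}(t)$ is bounded and its MGF is entire in $z$ anyway.
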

\begin{proof}
	Let us consider the case when the initial velocity is $V(0)=v_1$, under the condition $v_1 < 0 < v_2$. 
	Recalling Eqs.\ (\ref{eq:37}) and (\ref{eq:Mjzt}), we have
	\begin{equation*}
		\begin{aligned}
			M_1(z,t)&
			=\int_{-\infty}^{+\infty} e^{zx} \, \Tilde{p}(x,t\,|\,v_1) \,{\rm d}x\\
			&=\frac{e^{(v_1 z-\xi) t}}{1+\lambda t} + \xi \int_{0}^{v_1 t}\frac{e^{zx-\xi \frac{x}{v_1}}}{(v_1+\lambda x)} \,{\rm d}x 
			+\frac{\lambda}{(v_1-v_2)(1+\lambda t)}\int_{v_2 t}^{v_1t} e^{z x-\xi t}\,{\rm d}x
			\\
			&+\frac{\xi e^{\frac{\xi}{\lambda}}}{(v_1-v_2)}
			\Bigg\{\int_{v_2 t}^0 e^{z x}\Gamma\bigg[0,\bigg(\frac{x}{v_2}+\frac{1}{\lambda}\bigg) \xi,\bigg(t+\frac{1}{\lambda}\bigg)\xi\bigg]\,{\rm d}x\\
			&+\int_{0}^{v_1 t} e^{z x}\, \Gamma\bigg[0,\bigg(\frac{x}{v_1}+\frac{1}{\lambda}\bigg) \xi,\bigg(t+\frac{1}{\lambda}\bigg)\xi\bigg]\,{\rm d}x\Bigg\}.\\
		\end{aligned} 
	\end{equation*}
	After some direct calculations we obtain
	\begin{equation*}
		\begin{aligned}
			M_1(z,t)&=\frac{e^{(v_1 z-\xi) t}}{1+\lambda t} + \frac{\xi\, e^{\frac{\xi-v_1 t}{\lambda}}}{\lambda}\Gamma\bigg[0,\frac{\xi-v_1 t}{\lambda},\frac{(\xi-v_1 t)(1+\lambda t)}{\lambda}\bigg]\\
			&+\frac{\lambda\, e^{-\xi t}}{(v_1-v_2)(1+\lambda t)}\bigg(\frac{e^{v_1 t z}-e^{v_2 t z}}{x}\bigg)\\
			&+\frac{\xi \, e^{\frac{\xi-v_1 t}{\lambda}}}{(v_1-v_2)z}\, \Gamma\bigg[0,\frac{\xi-v_1 t}{\lambda},\frac{(\xi-v_1 t)(1+\lambda t)}{\lambda}\bigg]\\
			&-\frac{\xi \, e^{\frac{\xi-v_2 t}{\lambda}}}{(v_1-v_2)z}\, \Gamma\bigg[0,\frac{\xi-v_2 t}{\lambda},\frac{(\xi-v_2 t)(1+\lambda t)}{\lambda}\bigg] 
		\end{aligned}
	\end{equation*}
	such that $\xi-v_1 z >0$ and $\xi-v_2 z >0$, which imply 
	$z\in\left(\frac{\xi}{v_2} , \frac{\xi}{v_1} \right)$. 
	Along a similar line we can determine  the MGF $M_1(z,t)$  when 
	$0 <v_1 <v_2$, and the  MGF $M_2(z,t)$ in both cases $v_2 < 0 < v_1$ and $0 < v_1 <v_2$. 
	Eq.\ (\ref{eq:50}) thus follows.
\end{proof}
It is not hard to verify from the MGF (\ref{eq:50}) that $M_j(0,t)=1$ for all 
$t\in\mathbb{R}_0^{+}$. 
Now, making use of Eq.\ (\ref{eq:50}) we can obtain the first and second conditional moments of the process $\{\Tilde{X}(t), \; t\in \mathbb{R}_0^{+}\}$. 
\begin{theorem}\label{theorem:5.2}
	Under the assumptions of Theorem \ref{theorem:5.1}, 
	for all $t \in \mathbb{R}^+$ and $j=1,2$, we have 
	\begin{equation}
		\begin{aligned}
			{\rm E}_j[\Tilde{X}(t)]&=\big(1-e^{-\xi t}\big)\Big(\frac{v_1+v_2}{2 \xi}+\frac{v_j-v_{3-j}}{2 \lambda}\Big)\\
			&-\frac{\xi \, e^{\frac{\xi}{\lambda}}\big(v_j-v_{3-j}\big)}{2 \lambda^2}G_\xi(t)+\frac{t\, e^{-\xi t}(v_j-v_{3-j})}{2(1+\lambda t)}
		\end{aligned}
		\label{eq:54}
	\end{equation}
	\begin{equation}
		\begin{aligned}
			{\rm E}_j[\Tilde{X}^2(t)]&=\big(1-e^{-\xi t}\big)\Big[\frac{2\big(v_1^2+v_1v_2+v_2^2\big)}{3\xi^2}+\Big(\frac{2v_j^2-v_1 v_2- v_{3-j}^2}{3 \lambda}\Big)\Big(\frac{1}{\xi}-\frac{1}{\lambda}\Big)\Big]
   \\
			&-t e^{-\xi t}\,\left[ \frac{2(v_1^2+v_1v_2+v_2^2)}{3\xi}+\frac{2v_j^2-v_1 v_2-v_{3-j}^2}{3\lambda(1+\lambda t)}\right]
   \\
   & +\frac{\xi e^{\frac{\xi}{\lambda}}(2v_j^2-v_1 v_2-v_{3-j}^2)}{3\lambda^3}\,G_\xi(t)\\
		\end{aligned}
		\label{eq:55}
	\end{equation}
	where
	\begin{equation}
		G_\xi(t)=\Gamma\Big[0, \frac{\xi}{\lambda},\frac{\xi}{\lambda}\big(1+\lambda t\big)\Big].
		\label{eq:56}
	\end{equation}
\end{theorem}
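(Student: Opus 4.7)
The plan is to exploit the renewal relation (\ref{eq:12}) in integrated form, rather than differentiating the MGF of Theorem \ref{theorem:5.1} directly. Summing the sub-densities (\ref{eq:pijX}) over $i\in\{1,2\}$, the matrix $C$ collapses and the full probability law of the non-reset process takes the remarkably simple form
\[
p(x,s \mid v_j) = \frac{\delta(x - v_j s)}{1 + \lambda s} + {\mathbbm{1}}_{\{v_2 s < x < v_1 s\}}\,\frac{\lambda}{(v_1 - v_2)(1 + \lambda s)},
\]
namely an atom at the ballistic trajectory plus a \emph{uniform} continuous part on $(v_2 s, v_1 s)$. Integrating against $x$ and $x^2$ yields at once
\[
{\rm E}_j[X(s)] = \frac{1}{1+\lambda s}\left[v_j s + \frac{\lambda(v_1+v_2)}{2}\,s^2\right],\qquad {\rm E}_j[X^2(s)] = \frac{1}{1+\lambda s}\left[v_j^2 s^2 + \frac{\lambda(v_1^2+v_1v_2+v_2^2)}{3}\,s^3\right].
\]

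Integrating the renewal identity (\ref{eq:12}) against $x^n$, and checking that the atomic contributions satisfy the analogous convolution (by the strong Markov property at resetting instants), we obtain
\[
{\rm E}_j[\tilde{X}^n(t)] = e^{-\xi t}\,{\rm E}_j[X^n(t)] + \xi \int_0^t e^{-\xi s}\, {\rm E}_j[X^n(s)]\,{\rm d}s,\qquad n=1,2.
\]
Substituting the closed forms above reduces the problem to integrals of the type $\int_0^t s^k e^{-\xi s}/(1+\lambda s)\,{\rm d}s$ for $k=1,2,3$. The polynomial identities $\frac{s}{1+\lambda s} = \frac{1}{\lambda} - \frac{1}{\lambda(1+\lambda s)}$, $\frac{s^2}{1+\lambda s} = \frac{s}{\lambda} - \frac{1}{\lambda^2} + \frac{1}{\lambda^2(1+\lambda s)}$, and the analogous one for $s^3$, peel off elementary $e^{-\xi s}$-integrals (which produce the $(1-e^{-\xi t})$- and $te^{-\xi t}$-type contributions) and leave the single nontrivial residual $\int_0^t e^{-\xi s}/(1+\lambda s)\,{\rm d}s$. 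The change of variable $u = \xi(s + 1/\lambda)$ turns this residual into $\frac{e^{\xi/\lambda}}{\lambda}\,G_\xi(t)$, with $G_\xi(t)$ as in (\ref{eq:56}); this is precisely the mechanism by which the generalised incomplete Gamma function enters (\ref{eq:54})--(\ref{eq:55}).

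The main obstacle will be the algebraic bookkeeping: the boundary term $e^{-\xi t}{\rm E}_j[X^n(t)]$ and the rational contributions produce a variety of $(1-e^{-\xi t})$-, $te^{-\xi t}$- and $t^2 e^{-\xi t}$-terms that must cancel appropriately. The natural parametrisation is $v_j = \frac{v_1+v_2}{2} + \frac{v_j-v_{3-j}}{2}$: the $\frac{v_1+v_2}{2}$-symmetric contributions recombine into the $(1-e^{-\xi t})\,\frac{v_1+v_2}{2\xi}$ piece of (\ref{eq:54}) and the corresponding symmetric piece $\frac{2(v_1^2+v_1v_2+v_2^2)}{3\xi^2}(1-e^{-\xi t})$ of (\ref{eq:55}), while the $\frac{v_j-v_{3-j}}{2}$-asymmetric contributions furnish everything else, including the $G_\xi(t)$ prefactors. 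Useful consistency checks along the way: at $\xi\to 0^+$ one must recover ${\rm E}_j[X^n(t)]$ above, since $G_\xi(t)\to 0$ and $(1-e^{-\xi t})/\xi\to t$; and at $\xi\to+\infty$ both moments should vanish in agreement with $\tilde{X}(t)\stackrel{p}{\to}0$.
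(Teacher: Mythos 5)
Your proposal is correct, and it takes a genuinely different route from the paper. The paper obtains (\ref{eq:54})--(\ref{eq:55}) by differentiating the conditional MGF $M_j(z,t)$ of Theorem \ref{theorem:5.1} at $z=0$; you instead push the renewal identity (\ref{eq:12}) directly to the level of moments, writing ${\rm E}_j[\Tilde{X}^n(t)] = e^{-\xi t}{\rm E}_j[X^n(t)] + \xi\int_0^t e^{-\xi s}{\rm E}_j[X^n(s)]\,{\rm d}s$ and feeding in the reset-free moments, which follow from the collapsed density $p(x,s\,|\,v_j)=\frac{\delta(x-v_js)}{1+\lambda s}+{\mathbbm{1}}_{\{v_2s<x<v_1s\}}\frac{\lambda}{(v_1-v_2)(1+\lambda s)}$ (your row sums $c_{1,j}+c_{2,j}=1+\lambda t$ and the atom mass $\frac{1}{1+\lambda t}$ are both right, and your ${\rm E}_j[X^2(s)]$ agrees with the paper's Eq.\ (\ref{eq:63})). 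The only point to make explicit is that (\ref{eq:12}) is stated in the paper for the absolutely continuous sub-densities on $(v_2t,v_1t)$, so you must note that the same convolution holds for the full generalized density including the Dirac atom --- which it does, exactly as displayed in Eq.\ (\ref{eq:27}) of the paper's proof of Theorem \ref{theorem:4.1}. Your route buys a more elementary and self-contained derivation: it avoids computing the MGF altogether and, in particular, avoids the somewhat delicate $z\to 0$ expansion of the functions $G_j(z,t)=\Gamma\big[0,\frac{\xi-v_jz}{\lambda},\frac{(\xi-v_jz)(1+\lambda t)}{\lambda}\big]$ that the paper's differentiation requires; the generalized incomplete Gamma function instead enters cleanly through the single residual integral $\int_0^t\frac{e^{-\xi s}}{1+\lambda s}\,{\rm d}s=\frac{e^{\xi/\lambda}}{\lambda}G_\xi(t)$. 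I verified that your decomposition reproduces (\ref{eq:54}) exactly (with $A=\frac{v_1+v_2}{2}$, $B=\frac{v_j-v_{3-j}}{2}$ the three displayed terms emerge term by term), and the same bookkeeping yields (\ref{eq:55}); your limit checks at $\xi\to0$ and $\xi\to+\infty$ match Remark \ref{remark:5.3}.
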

\begin{proof}
	By making use of Eq.\ (\ref{eq:50}) and recalling that, for $j = 1,2$,
	\begin{equation*}
		{\rm E}_j[\Tilde{X}^n(t)]=\lim_{z \to 0} \frac{{\rm d}^n}{{\rm d}z^n}M_j(z,t),
		\qquad n=1,2 
	\end{equation*}
	the announced results thus follow.  
\end{proof}
\begin{remark} \label{maxmin}
Making use of Theorem \ref{theorem:5.2} we can determine the extreme values of the conditional mean ${\rm E}_j[\Tilde{X}(t)]$, $j=1,2$. Indeed, 
due to Eq.\ (\ref{eq:54}), one has 
$$
\frac{{\rm d}}{{\rm d} t}{\rm E}_j[\Tilde{X}(t)]
=\frac{e^{-\xi t}[2v_j+\lambda t (2+\lambda t)(v_1+v_2)]}{2(1+\lambda t)^2}, \qquad j=1,2,
$$
so that \\
	$(i)$ if $0<v_1<-v_2$, then ${\rm E}_1[\Tilde{X}(t)]$ has a maximum for 
	$$
     t=t_M:=-\frac{1}{\lambda}\left(1+\frac{\sqrt{v_2^2-v_1^2}}{v_1+v_2}\right),
 $$
	$(ii)$ if $0<-v_2<v_1$, then ${\rm E}_2[\Tilde{X}(t)]$ has a minimum for 
	$$
      t=t_m:=-\frac{1}{\lambda}\left(1-\frac{\sqrt{v_1^2-v_2^2}}{v_1+v_2}\right). 
    $$
\end{remark}

Figure \ref{v2} shows some instances of ${\rm E}_j[\Tilde{X}(t)]$, for different values of $v_1$ and $v_2$. Note that maximum and minimum values are visible in the plots when $v_1$ and $v_2$ are chosen according to the conditions given in Remark \ref{maxmin}.
\begin{figure}[!t]
	\centering
	\includegraphics[scale=0.46]{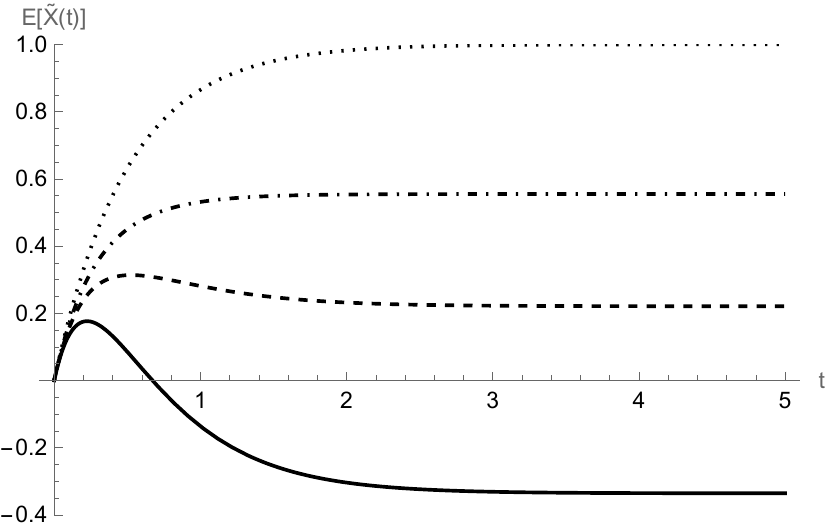}
	\includegraphics[scale=0.46]{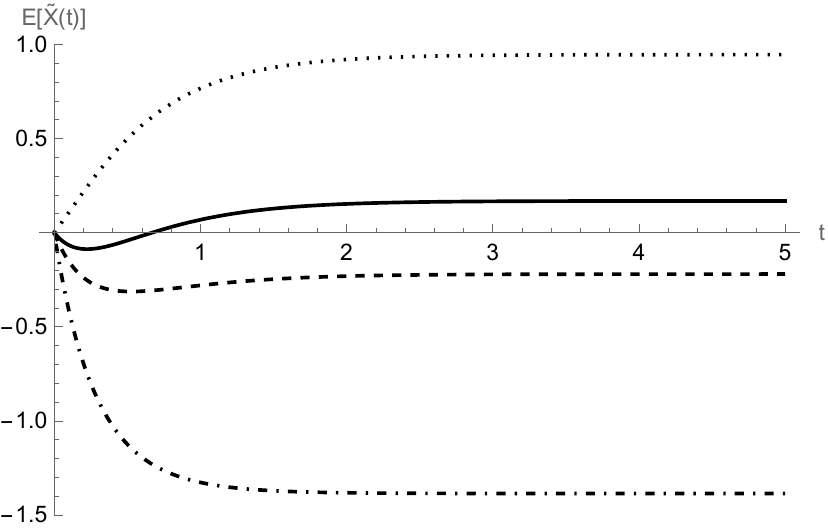}
	\caption{Plots of $ {\rm E}_j[\Tilde{X}(t)]$ for $j=1$ (on the left), $j=2$ (on the right), $\xi=2, \;\lambda=1,\; v_1=2$ and different values of $v_2=2,-2,-5,-10$ (from top to bottom) in the left side; for $\xi=2, \;\lambda=1,\; v_1=5$ and different values of $v_2=1,-1,-2,-5$ (from top to bottom) in the right side.}
	\label{v2}
\end{figure}
\begin{remark}\label{remark:5.4}
	Under the assumptions of Theorem \ref{theorem:5.2}, for $t \in \mathbb{R}^+_0$ one has the following symmetry relation:
	\begin{equation}
		{\rm E}_1[\Tilde{X}(t)]+{\rm E}_2[\Tilde{X}(t)]
  =(v_1+v_2)\Big(\frac{1-e^{-\xi t}}{\xi}\Big).
		\label{eq:59}
	\end{equation}
 The right-hand-side of Eq.\ (\ref{eq:59}) tends to $(v_1+v_2)t$ as $\xi \to 0$, 
 in agreement with Eq.\ (47) of \cite{di2023some}.
\end{remark}
\par
Let us now discuss some limit behaviors of the moments ${\rm E}_j[\Tilde{X}^n(t)]$, for $j=1,2$ and $n=1,2$. 
\begin{remark}\label{lambdainf}
	Under the assumptions of Theorem \ref{theorem:5.2}, for $t\in \mathbb{R}^+$ and $j=1,2$ one has:
\begin{equation}
\lim_{\lambda \to +\infty}{\rm E}_j[\Tilde{X}(t)]
   =\frac{v_1+v_2}{2}\Big(\frac{1-e^{-\xi t}}{\xi}\Big),
\label{lim_lambda_mom}
\end{equation}
 and 
 $$
	\lim_{\lambda \to +\infty}{\rm E}_j[\Tilde{X}^2(t)]
   =\frac{2(v_1^2+v_1v_2+v_2^2)}{3\xi^2}[1-e^{-\xi t}(1+\xi t)].
$$
 Clearly, when $\lambda$ goes to $+\infty$ the dependence on the initial velocity vanishes, so that the right-hand-side of Eq.\ (\ref{lim_lambda_mom}) is equal to one half of the right-hand-side of Eq.\ (\ref{eq:59}). 
\end{remark}

\begin{remark}\label{remark:5.3}
(i)	Under the assumptions of Theorem \ref{theorem:5.2}, for $t\in \mathbb{R}^+$ and $j=1,2$  one has:
	\begin{equation}
		\begin{aligned}
			\lim_{\xi \to 0}{\rm E}_j[\Tilde{X}(t)]
   &=\frac{(v_1+v_2)t}{2}+\frac{(v_j-v_{3-j})t}{2(1+\lambda t)},
   \\
			\lim_{\xi \to 0}{\rm E}_j[\Tilde{X}^2(t)]
   &=\frac{\big(v_1^2+v_1v_2+v_2^2\big)t^2}{3}+\frac{\big(2v_j^2-v_1v_2- v_{3-j}^2\big)t^2}{3 (1+\lambda t)}.
			\label{lim_xi_mom2}
		\end{aligned}
	\end{equation}
	These results correspond to those obtained in Theorem 2 of \cite{di2023some} for initial velocity  $V(0)=v_1$ and $\lambda_1=\lambda_2$. 
 \\
 (ii) Furthermore, for   $j=1,2$ and $n=1,2$ we have 
	\begin{equation*}
		\begin{aligned}
			\lim_{\xi \to +\infty}{\rm E}_j[\Tilde{X}^n(t)]=0.
		\end{aligned}
	\end{equation*}
 The latter result is in accordance with intuition. Indeed, if the intensity of the resets increases significantly, the process is frequently brought back to the origin. 
 \\
 (iii) Finally, under the same assumptions we get
	\begin{equation*}
		\begin{aligned}
			\lim_{t \to +\infty}{\rm E}_j[\Tilde{X}(t)]&=\frac{v_1+v_2}{2\xi}+\frac{v_j-v_{3-j}}{2\lambda}-\frac{\xi e^{\frac{\xi}{\lambda}}(v_j-v_{3-j})}{2\lambda^2}\Gamma\left(0,\frac{\xi}{\lambda}\right),
   \end{aligned}
   \end{equation*}
 \begin{equation}
		\begin{aligned}
			\lim_{t \to +\infty}{\rm E}_j[\Tilde{X}^2(t)]&=\frac{2(v_1^2+v_1v_2+v_2^2)}{3\xi^2}+\frac{2v_j^2-v_1v_2-v_{3-j}^2}{3\lambda}\left[\frac{1}{\xi}- \frac{1}{\lambda} \right] \\
			&+\frac{\xi e^{\frac{\xi}{\lambda}}(2v_j^2-v_1v_2-v_{3-j}^2)}{3\lambda^3}\Gamma\left(0,\frac{\xi}{\lambda}\right). 
    \label{eq:58}
    \end{aligned}
	\end{equation}
\end{remark}
\begin{figure}[t!]
	\centering
	\includegraphics[scale=0.7]{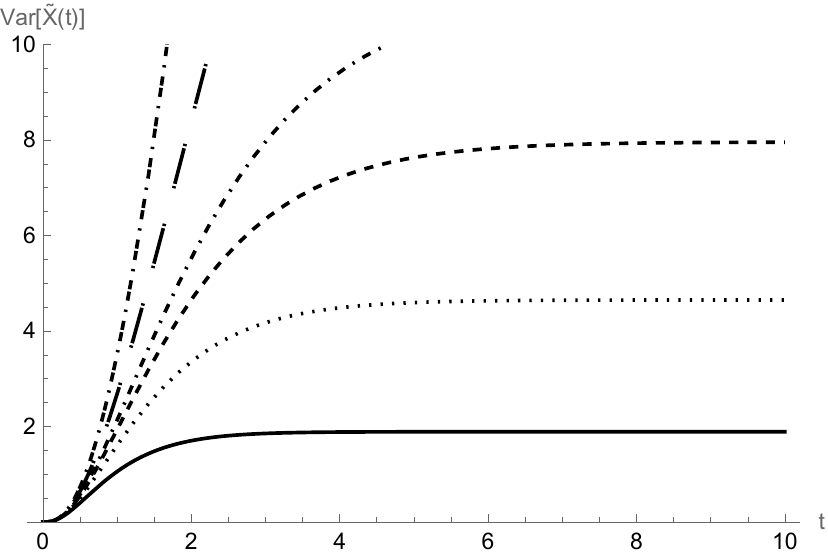}
	\caption{Plots of $ {\rm Var}[\Tilde{X}(t)]$ for $\lambda=1,\; v_1=2, \; v_2=-4$ and different values of $\xi=0.1,0.5,0.85,1,1.3,2$ (from top to bottom).}
	\label{varianze_fig}
\end{figure}
%
\par
It is worth noting that Eq.\ (\ref{eq:58}) shows that the 
mean square displacement of $\Tilde{X}(t)$, i.e.\ the moment of the second order, 
tends to a constant as $t$ goes to $+\infty$. This behaviour is quite different from 
other similar models. 
For instance, from Eq.\ (4.9) of Masò-Puigdellosas et al.\ \cite{maso2019transport} 
for a Gaussian propagator it follows that 
the asymptotic limit of the mean square displacement depends heavily on the 
decay parameter of the Pareto-distributed resetting times. 

\par
We are now able to investigate the behavior of the variance of the process. 
\begin{remark}\label{varianza}
The variance of $\Tilde{X}(t)$ conditional on $\{\Tilde{X}(0) = 0,V(0) = v_j\}$, 
$j = 1,2$, can be obtained by making use of Eqs.\ (\ref{eq:54}) and (\ref{eq:55}). 
Its expression is omitted being quite complex. 
However, we denote it by ${\rm Var}[\Tilde{X}(t)]$ since such conditional variance 
does not depend on the initial velocity $v_j$. 
In this case, whatever the initial velocity we have
	\begin{align}
		\lim_{\xi \to 0}{\rm Var}[\Tilde{X}(t)]&=\frac{\lambda t^3 (4+\lambda t)(v_1-v_2)^2}{12(1+ \lambda t)^2},
	\end{align}
	\begin{align} \label{limitevar}
		\lim_{t \to +\infty}{\rm Var}[\Tilde{X}(t)]&=\frac{5v_1^2+2v_1v_2+5 v_2^2}{12 \xi^2}+\frac{(v_1-v_2)^2}{6\lambda \xi}+\frac{-11v_1^2+10v_1v_2+v_2^2}{12 \lambda^2} \nonumber\\
		&+\frac{e^{\frac{\xi}{\lambda}}(v_1-v_2)(3\lambda(v_1+v_2))+\xi(7v_1-v_2)}{6 \lambda^3}\Gamma\left( 0, \frac{\xi}{\lambda}\right) 
        \nonumber\\
		&-\frac{e^{\frac{2\xi}{\lambda}}\xi^2(v_1-v_2)^2}{4\lambda^4}\left[\Gamma\left( 0, \frac{\xi}{\lambda}\right)  \right]^2.
	\end{align}
\end{remark}
Some instances of ${\rm Var}[\Tilde{X}(t)]$ are shown in Figure \ref{varianze_fig} 
for different values of $\xi$. 
It can be seen from Eq.\ (\ref{limitevar}) that, when $\xi$ tends to zero, the variance diverges with respect to $t$ (see also the case $\xi=0.1$ in Figure \ref{varianze_fig}). This is in accordance with Eq.\ (45) of  \cite{di2023some}, where it is shown that, for the process without resets $X(t)$, the variance is divergent. 
\subsection{Mean-square distance}\label{subsec:mean}

Here, we refer to a  probability metric as a functional that measures a suitable distance between the processes $\Tilde{X}(t)$ and $X(t)$, introduced in Section \ref{sec:2}. 
In other terms, we aim to measure   the mean-square distance between the process subject to resets and the same process without resetting. 
To this aim, for  $\xi> 0$, $t \in \mathbb{R}_0^+$ and $j=1,2$ we define
\begin{equation}
	\begin{aligned}
		\Delta_{j}^{\lambda}(\xi,t)&:= {\rm E}_j\big[\lvert\Tilde{X}(t)-X(t)\rvert^2\big]\\
		&={\rm E}_j\big[\Tilde{X}^2(t)\big]+{\rm E}_j\big[X^2(t)\big]-2\, {\rm E}_j\big[\Tilde{X}(t)\big]\, {\rm E}_j\big[X(t)\big],
	\end{aligned}
	\label{eq:61}
\end{equation}
where ${\rm E}_j$ represents the mean conditional on initial velocity $v_j$ for both processes $\Tilde{X}(t)$ and $X(t)$, $j=1,2$. 
Clearly, from Eq.\ (\ref{eq:61}) one  has 
$$
 \Delta_{j}^{\lambda}(0,t)
 := \lim_{\xi\to 0}\Delta_{j}^{\lambda}(\xi,t)
  =2\,{\rm Var}[X(t)],
$$
where such variance does not depend on  $j$. Indeed, making use of the results given in Section 3.3 of \cite{di2023some}, we have 
$$
{\rm Var}[X(t)]
=\frac{\lambda t^3(4+\lambda t)(v_1-v_2)^2}{12(1+\lambda t)},
\qquad V(0)=v_j, \; j=1,2 
$$
and
\begin{equation}
	{\rm E}_j[X^2(t)]
	=\frac{t^2[3v_j^2+\lambda t(v^2_1+v_1v_2+v_2^2)]}{3(1+\lambda t)},
	\qquad  j=1,2.
	\label{eq:63}
\end{equation}
\begin{theorem}\label{proposition:5.1}
	Under the assumptions of Theorem \ref{theorem:5.2}, the mean-square distance defined in Eq.\ (\ref{eq:61}), for $\lambda>0$ and $t \in \mathbb{R}_0^+$ is given by 
	\begin{equation}
		\Delta_{j}^{\lambda}(\xi,t)
		={\rm E}_j[X^2(t)]+
  A_{j}^{\lambda}(\xi,t),
		\qquad j =1,2
		\label{eq:60}
	\end{equation}
	where
	\begin{equation}
		\begin{aligned}
			A_{j}^{\lambda}(\xi,t)
			& =  -t e^{-t \xi}\Bigg[\frac{2v_j^2-v_1 v_2-v_{3-j}^2}{3 \lambda(1+\lambda t)}+\frac{2(v^2_1+v_1v_2+v_2^2)}{3 \xi}+\frac{t(v_j-v_{3-j})[2v_{j} + \lambda t(v_1+v_2)]}{2(1+\lambda t)^2}\Bigg]
			\nonumber
			\\
			& +  (1-e^{-\xi t})\Bigg[\frac{2(v^2_1+v_1v_2+v_2^2)}{3 \xi^2}+\frac{2v_j^2-v_1 v_2-v_{3-j}^2}{3 \lambda}\bigg(\frac{1}{\xi}-\frac{1}{\lambda}\bigg)    
			\nonumber
			\\
			&  -\frac{t(2v_{j} + \lambda t(v_1+v_2))}{2(1+\lambda t)}\bigg(\frac{v_j-v_{3-j}}{\lambda}+\frac{v_1+v_2}{\xi}\bigg)\Bigg]
			\nonumber
			\\
			& +    \frac{e^{\frac{\xi}{\lambda}}\xi}{\lambda^2}G_\xi(t)\Bigg[\frac{2v_j^2-v_1 v_2-v_{3-j}^2}{3 \lambda}+\frac{t(v_j-v_{3-j})(2v_{j} + \lambda t(v_1+v_2))}{2(1+\lambda t)}\Bigg], 
		\end{aligned}
	\end{equation}
	for $G_\xi(t)$  given in (\ref{eq:56}), and ${\rm E}_j[X^2(t)]$ 
	shown in (\ref{eq:63}). 
\end{theorem}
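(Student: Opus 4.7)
The plan is to start from the decomposition already built into Eq.\ (\ref{eq:61}),
\begin{equation*}
\Delta_j^\lambda(\xi,t) = {\rm E}_j[\tilde X^2(t)] + {\rm E}_j[X^2(t)] - 2\,{\rm E}_j[\tilde X(t)]\,{\rm E}_j[X(t)],
\end{equation*}
which already isolates the term ${\rm E}_j[X^2(t)]$ appearing on the right-hand side of (\ref{eq:60}). The problem therefore reduces to verifying the identity
\begin{equation*}
A_j^\lambda(\xi,t) = {\rm E}_j[\tilde X^2(t)] - 2\,{\rm E}_j[\tilde X(t)]\,{\rm E}_j[X(t)].
\end{equation*}
Three of the four expectations required on the right are already known: ${\rm E}_j[\tilde X(t)]$ and ${\rm E}_j[\tilde X^2(t)]$ come from Eqs.\ (\ref{eq:54})--(\ref{eq:55}), and ${\rm E}_j[X^2(t)]$ is Eq.\ (\ref{eq:63}). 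The missing ingredient, ${\rm E}_j[X(t)]$, is obtained by taking $\xi\to 0$ in (\ref{eq:54}), as recorded in Remark~\ref{remark:5.3}(i); after combining the two fractions this yields
\begin{equation*}
{\rm E}_j[X(t)] = \frac{t\,[\,2v_j+\lambda t(v_1+v_2)\,]}{2(1+\lambda t)}.
\end{equation*}

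The appearance of the block $2v_j+\lambda t(v_1+v_2)$ inside the statement of $A_j^\lambda(\xi,t)$ is precisely the fingerprint of this last expression, signalling that every occurrence of it must arise from the cross term $-2\,{\rm E}_j[\tilde X(t)]\,{\rm E}_j[X(t)]$. The rest of the argument is algebraic: substitute the four formulas, expand the product, and group the resulting monomials according to the three exponential signatures that structure $A_j^\lambda(\xi,t)$, namely $(1-e^{-\xi t})$, $-t e^{-\xi t}$, and $\frac{e^{\xi/\lambda}\xi}{\lambda^2}\,G_\xi(t)$. The first two brackets of (\ref{eq:55}) feed the first two brackets of $A_j^\lambda$ directly; the cross term contributes, in each of the three signatures, exactly the pieces carrying the factor $t(v_j-v_{3-j})[2v_j+\lambda t(v_1+v_2)]/(1+\lambda t)$; and the $G_\xi(t)$-piece of (\ref{eq:55}) pairs with the $G_\xi(t)$-piece of the cross term to produce the final bracket.

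No step is conceptually difficult; the only real obstacle is bookkeeping, in particular tracking signs when combining the $(1-e^{-\xi t})$- and $e^{-\xi t}$-pieces of ${\rm E}_j[\tilde X(t)]$ against ${\rm E}_j[X(t)]$, and pairing the $G_\xi(t)$-terms correctly. Two inexpensive sanity checks are available. At $t=0$ every contribution vanishes, so the identity is trivially true. As $\xi\to 0$, continuity gives $\tilde X(t)\to X(t)$, so the result must collapse to $\Delta_j^\lambda(0,t) = 2\,{\rm Var}[X(t)]$; equivalently, $\lim_{\xi\to 0} A_j^\lambda(\xi,t) = {\rm E}_j[X^2(t)] - 2({\rm E}_j[X(t)])^2$, which can be verified directly from the closed forms just recalled and catches essentially every sign error likely to arise during the substitution.
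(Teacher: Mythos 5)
Your proposal is correct and follows essentially the same route as the paper: substitute the known moment formulas (Eqs.\ (\ref{eq:54}), (\ref{eq:55}), (\ref{eq:63}), and ${\rm E}_j[X(t)]$ obtained as the $\xi\to 0$ limit of (\ref{eq:54}), which indeed simplifies to $t[2v_j+\lambda t(v_1+v_2)]/[2(1+\lambda t)]$) into the definition (\ref{eq:61}) and carry out the algebraic bookkeeping. The paper's proof is exactly this "recall (\ref{eq:54})--(\ref{eq:55}) and compute" argument, so your added structure (tracking the cross term and the $\xi\to0$ sanity check) only makes the same computation more explicit.
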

\begin{proof}
	Recalling Eqs.\ (\ref{eq:54}) and (\ref{eq:55}), 
	after straightforward calculations one obtains Eq.\ (\ref{eq:60}) from Eq.\ (\ref{eq:61}). 
\end{proof}
\begin{corollary}\label{remark:5.5}
	From Theorem \ref{proposition:5.1}  we have
	\begin{equation*}
		\lim_{\xi \to +\infty} \Delta_{j}^{\lambda}(\xi,t)
		={\rm E}_j[X^2(t)],
		\qquad j=1,2.
	\end{equation*}
	where   ${\rm E}_j[X^2(t)]$ is given in (\ref{eq:63}). 
\end{corollary}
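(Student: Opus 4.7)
The plan is to show that $\lim_{\xi \to +\infty} A_{j}^{\lambda}(\xi,t) = 0$, so that the decomposition in Theorem \ref{proposition:5.1} collapses to the claim. A conceptually quicker route is to argue straight from the defining identity (\ref{eq:61}): point (ii) of Remark \ref{remark:5.3} gives $\lim_{\xi \to +\infty}{\rm E}_j[\Tilde{X}^n(t)] = 0$ for $n=1,2$, so the first and third summands on the right-hand side of (\ref{eq:61}) vanish and only ${\rm E}_j[X^2(t)]$ survives. I would include this short argument as the primary proof and then verify consistency by inspecting the three summands of $A_{j}^{\lambda}(\xi,t)$ directly.

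For the direct verification, I would treat the three summands one at a time. The first carries a factor $e^{-\xi t}$, and since each term inside its bracket is either bounded in $\xi$ or $O(1/\xi)$, it vanishes in the limit. The second summand has $(1-e^{-\xi t})\to 1$ and all $1/\xi,\,1/\xi^2$ contributions inside its bracket disappear, so it converges to the finite limit
\[
L_2 := -\frac{2v_j^2-v_1 v_2-v_{3-j}^2}{3\lambda^2} - \frac{t(v_j-v_{3-j})[2v_j+\lambda t(v_1+v_2)]}{2\lambda(1+\lambda t)}.
\]

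The main obstacle, and the only place where care is needed, is the third summand, because of the exponentially growing prefactor $e^{\xi/\lambda}$. I would decompose $G_\xi(t) = \Gamma(0,\xi/\lambda) - \Gamma(0,\xi(1+\lambda t)/\lambda)$ and use the standard large-argument asymptotics $\Gamma(0,u) = \frac{e^{-u}}{u}\bigl(1 - \frac{1}{u} + O(u^{-2})\bigr)$. The second piece is exponentially smaller than the first by the factor $e^{-\xi t}$, so
\[
\frac{e^{\xi/\lambda}\xi}{\lambda^2}\,G_\xi(t) \;=\; \frac{1}{\lambda}\Bigl(1 - \frac{\lambda}{\xi} + O(\xi^{-2})\Bigr) \;\longrightarrow\; \frac{1}{\lambda} \qquad (\xi \to +\infty).
\]
Consequently the third summand converges to $-L_2$, cancelling the contribution from the second summand. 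This yields $\lim_{\xi \to +\infty} A_{j}^{\lambda}(\xi,t) = 0$, confirming the result and providing an internal consistency check on the formula of Theorem \ref{proposition:5.1}.

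Intuitively, the result is transparent: as $\xi\to+\infty$ the resetting mechanism brings the particle back to the origin so quickly that $\Tilde{X}(t)\stackrel{p}{\to}0$ (as noted after Figure \ref{FigSamplePath}), and Remark \ref{remark:5.3}(ii) upgrades this to convergence of the first two moments. The unresetted process $X(t)$ is independent of the resetting times, so the cross term in (\ref{eq:61}) drops out and the mean-square distance reduces to ${\rm E}_j[X^2(t)]$, the second moment of the process in the absence of resets.
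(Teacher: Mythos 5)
Your proof is correct. The paper's own proof is a one-liner: it simply asserts that $\lim_{\xi \to +\infty}A_{j}^{\lambda}(\xi,t)=0$ and stops there, so your direct verification of that limit is in essence the paper's argument, but with the actual work filled in. In particular, you correctly identify the only delicate point: the third summand of $A_{j}^{\lambda}(\xi,t)$ is an indeterminate product $e^{\xi/\lambda}\cdot G_\xi(t)$, and the large-argument expansion $\Gamma(0,u)\sim e^{-u}/u$ shows that $\frac{e^{\xi/\lambda}\xi}{\lambda^2}G_\xi(t)\to\frac{1}{\lambda}$, so that the second and third summands converge to nonzero limits $L_2$ and $-L_2$ that cancel --- a cancellation the paper's ``immediately follows'' glosses over. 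Your primary route, going back to the definition (\ref{eq:61}) and invoking Remark \ref{remark:5.3}(ii) so that only ${\rm E}_j[X^2(t)]$ survives, is a legitimate and cleaner alternative that bypasses the explicit formula for $A_{j}^{\lambda}(\xi,t)$ entirely; the only caveat is that Remark \ref{remark:5.3}(ii) is stated in the paper without proof and its verification requires exactly the same $\Gamma(0,u)$ asymptotics applied to (\ref{eq:54}) and (\ref{eq:55}), so the shortcut relocates rather than eliminates the analytic work. Both arguments are sound, and presenting the asymptotic computation explicitly is a genuine improvement in rigor over the published proof.
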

\begin{proof}
	It immediately follows by noting that 
	$\lim_{\xi \to +\infty}A_{j}^{\lambda}(\xi,t)=0$. 
\end{proof}
We note that the result expressed in Corollary \ref{remark:5.5} arises from 
(\ref{eq:61}) since  $\Tilde{X}(t)$ converges to zero 
as $\xi \to +\infty$, i.e.\ it converges to zero in the presence of instantaneous resets occurring with rate growing to $+\infty$. 
\par
Some plots of $\xi \mapsto \Delta_{j}^{\lambda}(\xi,t)$ are shown in Figures \ref{dist1} and \ref{dist2} for different values of $\lambda$.
Specifically, Figure \ref{dist1} shows some trends when the initial velocity is $V(0)= {v}_1$, with $|v_1|>|v_2|$. If the velocities have discordant signs (see Figure \ref{dist1}, left case) then there is a single minimum. The same result is obtained when $V(0)={v}_2$, with $|{v}_2|>|{v}_1|$.
If $V(0)={v}_2$ with $|{v}_1|>|{v}_2|$, then the distance $\Delta_{j}^{\lambda}(\xi,t)$ is monotonic decreasing in $\xi$. This is justified by the fact that the presence of resets causes the initial velocity to prevail. This is also confirmed by Figure \ref{dist2}, left case.
If velocities ${v}_1$ and ${v}_2$ have the same sign, then the distance $\Delta_{j}^{\lambda}(\xi,t)$ is monotonic increasing in $\xi$, whatever the initial speed (see  Figures \ref{dist1} and \ref{dist2}, right case).
\begin{figure}[ht!]
	\centering
	\includegraphics[scale=0.72]{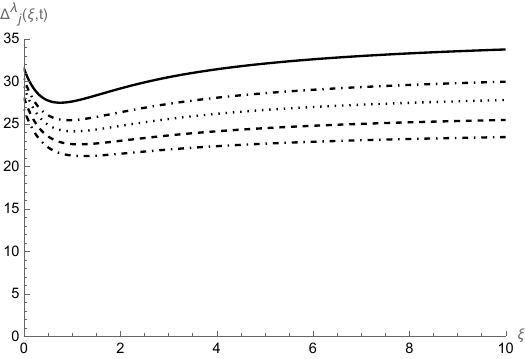}
	\includegraphics[scale=0.72]{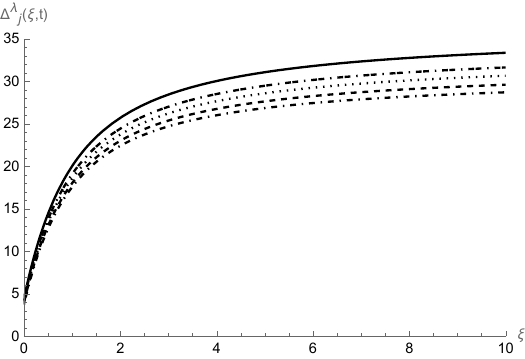}
	\caption{Plots of $\Delta_{j}^{\lambda}(\xi,t)$, with $j=1$, for $t=3$, $v_1=3$, $v_2=-1$ and $\lambda=1,1.5,2,3,5$ (from top to bottom) in the left side; for $t=3$, $v_1=2.5$, $v_2=1$ and $\lambda=1,1.5,2,3,5$ (from top to bottom) in the right side.}
	\label{dist1}
\end{figure}
\begin{figure}[ht!]
	\centering
	\includegraphics[scale=0.72]{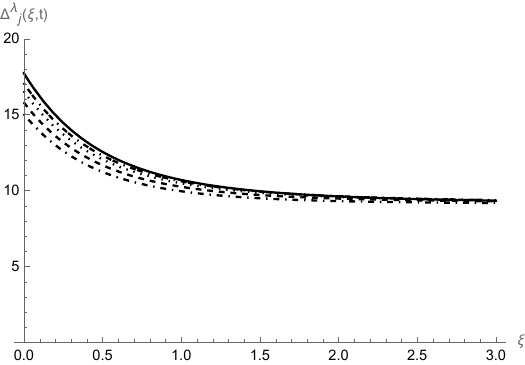}
	\includegraphics[scale=0.72]{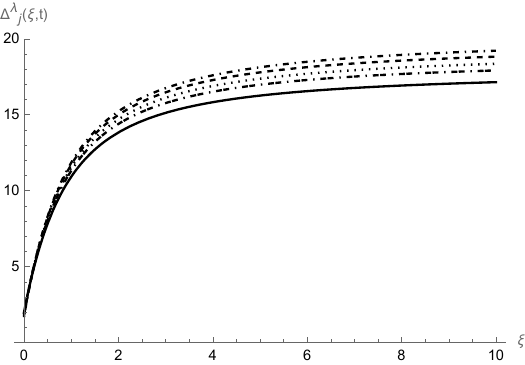}
	\caption{Plots of $\Delta_{j}^{\lambda}(\xi,t)$, with $j=2$, for $t=3$, $v_1=2$, $v_2=-1$ and $\lambda=1,1.5,2,3,5$ (from bottom to top) in the left side; for $t=3$, $v_1=2$, $v_2=1$ and $\lambda=1,1.5,2,3,5$ (from bottom to top) in the right side.}
	\label{dist2}
\end{figure}
%
\section{Concluding remarks}\label{sec:6}

This paper has been focused on the study of the telegraph process
such that (i) the particle velocity alternates cyclically between $v_1$ and $v_2$ according to a GCP, and (ii) the particle position is reset randomly in time with a constant rate $\xi$, this corresponding to Poissonian resetting to the initial position.

\par 
The presence of intertimes having Pareto-type distributions yields results that are quite different from those concerning the standard telegraph process with exponential intertimes.

\par
The main effects of the Poissonian resetting is the insurgence of a steady-state limit for the process as $t\to +\infty$. In particular, the steady-state density  has  a remarkable different behavior in the cases $v_2<0<v_1$ and $0<v_2<v_1$. Moreover, 
it is worth mentioning that the mean and the variance of the process possess a finite limit as $t$ tends to $+\infty$. This effect arises due to presence of the resets, and indeed it is more evident when the reset rate $\xi$ increases. 

\par 
The investigation has been focused, in conclusion, also to the analysis of the  mean-square distance between the process subject to resets and the same process without resetting. In some instances, for $t$ fixed, the mean-square distance at time $t$ is monotonic in $\xi$, whereas 
in certain cases it possesses a minimum or a maximum at a finite value of $\xi$.   
\par
Moreover, we observe that the process $\Tilde{X}(t)$ admits a stationary density when the initial velocity is random. However, when $\xi$ tends to $0$ the stationary PDF does not exist, as in the classical telegraph process driven by the Poisson process. In a similar way, the stationary density  exists also for the telegraph process with Poisson-paced velocity changes with instantaneous resets (cf.\ Masoliver \cite{masoliver2019telegraphic} and Evans et al.\ \cite{evans2018run}) 
and with non-instantaneous resets (see Radice \cite{radice2021one}).
\par

The novelty of the proposed model is mainly based on the fact that it provides one of the first cases in which the reset phenomenon is studied for an extended telegraph processes with non-exponential but Pareto-distributed times between consecutive velocity changes. 
Moreover, it is remarkable to stress that the approach based on the Poissonian renewals of the resets rather than the analysis of the partial differential equations governing the motion allowed us to come to the non-trivial results obtained in the foregoing, such as 
\\
-- the existence of closed-form and tractable expressions of the transient densities,  the flow function and the main moments of the process, 
\\
-- the determination of the limiting densities and moments obtained as the intensity of the process and the time go to infinity, 
\\
-- the analysis of the extremal values of the mean-square distance between the processes with and without resets. 

\par

We point out that future developments of the present investigation can be oriented to the 
analysis of the first-passage-time problem for the considered process through a suitable 
barrier (see, e.g.\ Bodrova and Sokolov \cite{bodrova2020resetting}). 
For instance, this is aimed to study the mean first-passage-time, i.e.\ 
the average time necessary to hit a specified target, since this quantity is of large interest in biomathematics, for instance
in foraging problems for randomly moving animals.
In addition, the stochastic model exploited herewith and the 
related first-passage-time problems deserve interest for  
\\
-- the analysis of run-and-tumble particle motion perturbed by Gaussian white
noise or subjected to confining potentials,
\\
-- the description of the alternating behaviour 
of the prices of risky assets typically observed in financial markets,
\\
-- the modeling of finite-velocity random motions arising in geophysical phenomena 
related to the earthquakes occurrences and the vertical motions of volcanic regions. 
\par

More detailed comments and a list of useful references about the above mentioned applied 
contexts are provided in the Section 1 of \cite{di2023some}.
\par
In conclusion, we depict a further  source of  ideas for future studies and applications. Indeed, specific interest toward  reset processes arises also in the area of robotic search problems, where single or teams of sensor-enabled mobile robots collaborate for the random exploration and the control of fields under observation. The extended telegraph process with underlying GCP's and Poissonian resets is a prototype model for the description of units moving through random alternations within the search for a mobile target. 
In this setting, in the absence of target identification the reset and restart is viewed (i) as a return to the origin caused by the near exhaustion of the resources of the robot, followed by a resource refill, or (ii) as a replacement of a damaged unit 
by a new devise that starts again the search from the origin. The use of the underlying GCP instead of the classical Poisson process, in this setting, allows to take into account random times between changes of directions of the robots characterized by heavy tails. 

\backmatter

%
%
%

\section*{Acknowledgments}

The authors are members of the group GNCS of INdAM (Istituto Nazionale
di Alta Matematica). 
This work is partially supported by PRIN 2022, project ``Anomalous Phenomena on Regular and Irregular Domains: Approximating Complexity for the Applied Sciences", PRIN 2022 PNRR, project ``Stochastic Models in Biomathematics and Applications", 
and by INdAM-GNCS,
Project ``Modelli di shock basati sul processo di conteggio geometrico e
applicazioni alla sopravvivenza''(CUP E55F22000270001).

\section*{Data Availability}
 All data generated or analysed during this study are included in this published article.

\section*{Declarations}
All authors contributed equally to this paper.
The authors declare that they have no known competing financial interests or personal relationships that could have appeared to influence the work reported in this paper.

\bibliography{sn-bibliography}

\begin{thebibliography}{99}
    
	%
    \bibitem{angelani2018probability}
    Angelani, L., Garra, R.: Probability distributions for the run-and-tumble models with variable speed and tumbling rate. Mod. Stoch. Theory Appl. 6(1), 3-12 (2018)
    
	\bibitem{artalejo2007evaluating}
	Artalejo, J.R., Economou, A., Lopez-Herrero, M.: Evaluating growth measures in an immigration process subject to binomial and geometric catastrophes. Math. Biosci. Eng. 4(4)  573-594 (2007)

    \bibitem{barrera2023quantitative} 
    Barrera, G., Lukkarinen, J.: Quantitative control of Wasserstein distance between Brownian motion and the Goldstein–Kac telegraph process. Ann. inst. Henri Poincare (B) Probab. Stat.  59, No. 2, pp. 933-982 (2023)
    
    \bibitem{bodrova2020resetting}
    Bodrova, A.S., Sokolov, I.M.: Resetting processes with noninstantaneous return. Pysh. Rev. E. 101(5), 052130 (2020)

    \bibitem{beghin2001probabilistic} 
	Beghin, L., Nieddu, L., Orsingher, E.: Probabilistic analysis of the telegrapher's process with drift by means of relativistic transformations. J. Appl. Math. Stoch. Anal. 14(1), 11-25 (2001)
    
	\bibitem{boudali2013effect}
	Boudali, O., Economou, A.: The effect of catastrophes on the strategic customer behavior in queueing systems. Nav. Res. Logist. 60(7), 571-587 (2013)
	
	\bibitem{cha2013note} 
	Cha, J.H., Finkelstein, M.: A note on the class of geometric counting processes. Probab. Eng. Inf. Sci. 27(2), 177-185 (2013) 
	
   \bibitem{chechkin2018random}
   Chechkin, A., Sokolov, I.M.: Random search with resetting: a unified renewal approach. Phys. Rev. Lett. 121(5), 050601 (2018)
 
	\bibitem{cinque2022cond}
    Cinque, F.: A note on the conditional probabilities of the telegraph process
    Statist. Probab. Lett. 185, No.\ 109431, 10 pp. (2022)

	\bibitem{cinque2021exact}
	Cinque, F., Orsingher, E.: On the exact distributions of the maximum of the asymmetric telegraph process. Stoch. Process.  Appl. 142, 601-633 (2021)
	
	\bibitem{crimaldi2013generalized}
	Crimaldi, I., Di Crescenzo, A., Iuliano, A., Martinucci, B.: A generalized telegraph process with velocity driven by random trials. Adv. Appl. Probab. 45(4), 1111-1136 (2013)

    \bibitem{de2021telegraph}
    De Gregorio, A., Iafrate, F.: Telegraph random evolutions on a circle. Stoch. Process. Their Appl. 141, 79-108 (2021)

    \bibitem{de2020random}
    De Gregorio, A., Orsingher, E.: Random flights connecting porous medium and Euler–Poisson–Darboux equations. J. Math. Phys. 61(4) (2020)
    
 \bibitem{dhar2019}
Dhar, A.,  Kundu, A., Majumdar, S.N., Sabhapandit, S., Schehr, G.: 
    Run-and-tumble particle in one-dimensional confining potentials: Steady-state, relaxation, and first-passage properties. 
Phys. Rev. E 99, 032132 (2019) 

	\bibitem{dharmaraja2015continuous}
	Dharmaraja, S., Di Crescenzo, A., Giorno, V., Nobile, A.G.: A continuous-time Ehrenfest model with catastrophes and its jump-diffusion approximation. J. Stat. Phys. 161, 326-345 (2015)
	
		
	\bibitem{di2008note}
	Di Crescenzo, A., Giorno, V., Nobile, A.G., Ricciardi, L.M.: A note on birth-death processes with catastrophes. Stat. Probab. Lett. 78(14), 2248-2257 (2008)
	
	\bibitem{di2023some} 
	Di Crescenzo, A., Iuliano, A., Mustaro, V.: On some finite-felocity random motions driven by the geometric counting process. J. Stat. Phys. 190(3), 44 (2023)

    \bibitem{DiCrescMeoli2018}
    Di Crescenzo, A., Meoli, A.: On a jump-telegraph process driven by an alternating fractional Poisson process. J. Appl. Prob. 55, 94-111 (2018)

	\bibitem{di2019some} 
	Di Crescenzo, A., Pellerey, F.: Some results and applications of geometric counting processes. Methodol. Comput. Appl. Probab. 21(1), 203-233 (2019)
	
	\bibitem{di2019probabilistic}
	Di Crescenzo, A., Travaglino, F.: Probabilistic analysis of systems alternating for state-dependent dichotomous noise. Math. Biosci. Eng. 16(6), 6386-6405 (2019)

    \bibitem{dimou2013single}
	Dimou, S., Economou, A.: The single server queue with catastrophes and geometric reneging. Methodol. Comput. Appl. Probab. 15, 595-621 (2013)

    \bibitem{d2018fractional}
    D'Ovidio, M., Polito, F.: Fractional diffusion-telegraph equations and their associated stochastic solutions. Theory Probab. Appl. 62(4), 552-574 (2018)
    
	\bibitem{economou2007batch}
	Economou, A., Gómez-Corral, A.: The batch Markovian arrival process subject to renewal generated geometric catastrophes. Stoch. Models 23(2), 211-233 (2007)
    
    \bibitem{evans2014diffusion}
    Evans, M.R., Majumdar, S.N.: Diffusion with resetting in arbitrary spatial
    dimension. J. Phys. A Math. Theor. 47, 285001 (2014)
    
	\bibitem{evans2018run}
	Evans, M.R., Majumdar, S.N.: Run and tumble particle under resetting: a renewal approach. J. Phys. A Math. Theor. 51(47), 475003 (2018)
	
	\bibitem{evans2020stochastic}
	Evans, M.R., Majumdar, S.N., Schehr, G.: Stochastic resetting and applications. J. Phys. A Math. Theor. 53(19), 193001 (2020)
	
	\bibitem{giorno2014some}
	Giorno, V., Nobile, A.G., Spina, S.: On some time non-homogeneous queueing systems with catastrophes. Appl. Math. Comput. 245, 220-234 (2014)

    \bibitem{kolesnik2018linear}
    Kolesnik, A.D.: Linear combinations of the telegraph random processes driven by partial differential equations. Stoch. Dyn. 18(04), 1850020 (2018) 
    
   \bibitem{majumdar2015dynamical}
    Majumdar, S.N., Sabhapandit, S., Schehr, G.: Dynamical transition in the temporal relaxation of stochastic processes under resetting. Phys. Rev. E. 91(5), 052131 (2015)
    
    \bibitem{martinucci2022some}
    Martinucci, B., Meoli, A., Zacks, S.: Some results on the telegraph process driven by gamma components. Adv. Appl. Probab. 54(3), 808-848 (2022)
    
    \bibitem{maso2019transport}
    Masò-Puigdellosas, A., Campos, D., Méndez, V.: Transport properties of random walks under stochastic noninstantaneous resetting. Phys. Rev. E. 100(4), 042104 (2019)
    
	\bibitem{masoliver2019telegraphic}
	Masoliver, J.: Telegraphic processes with stochastic resetting. Phys. Rev. E. 99(1), 012121 (2019)
	
	\bibitem{Orsingher1990}
	Orsingher, E.: Probability law, flow function, maximum distribution of wave-governed random motion sand their connections with Kirchoff's laws. Stoch. Process. Appl. 34(1), 49-66 (1990)

    \bibitem{orsingher2019cyclic}
    Orsingher, E., Garra, R., Zeifman, A.I.: Cyclic random motions with orthogonal directions. Markov Process. Relat. Fields. 26, no. 3, 381-402 (2020)

    \bibitem{pogorui2022stationary}
    Pogorui, A.A., Rodríguez-Dagnino, R.M.: Stationary density function for a random evolution driven by a Markov-switching Ornstein-Uhlenbeck process with finite velocity. Random Oper. Stoch. Equ. 30(2), 113-120 (2022)

  \bibitem{pogorui2023Hyperbola}
    Pogorui, A.A., Rodríguez-Dagnino, R.M.: Telegraph process on a hyperbola. 
    J. Stat. Phys.  190, no.\ 117 (2023)

    \bibitem{radice2021one}
	Radice, M.: One-dimensional telegraphic process with noninstantaneous stochastic resetting. Phys. Rev. E. 104(4), 044126 (2021)

    \bibitem{ratanov2020first}
    Ratanov, N.: First crossing times of telegraph processes with jumps. Methodol. Comput. Appl. Probab. 22(1), 349-370 (2020)

    \bibitem{ratanov2021ornstein}
    Ratanov, N.: Ornstein-Uhlenbeck processes of bounded variation. Methodol. Comput. Appl. Probab. 23(3), 925-946 (2021)
    
	\bibitem{ricciuti2023semi}
	Ricciuti, C., Toaldo, B.: From semi-Markov random evolutions to scattering transport and superdiffusion. Commun. Math. Phys. 1-44 (2023)
	
	\bibitem{sandev2022heterogeneous}
     Sandev, T., Domazetoski, V., Kocarev, L., Metzler, R., Chechkin, A.: Heterogeneous diffusion with stochastic resetting. J. Phys. A Math. Theor. 55, 074003 (2022)
     
	\bibitem{sandev2022stochastic}
	Sandev, T., Kocarev, L., Metzler, R., Chechkin, A.: Stochastic dynamics with multiplicative dichotomic noise: Heterogeneous telegrapher's equation, anomalous crossovers and resetting. Chaos Solit. Fractals 165, 112878 (2022)
	
	\bibitem{sokolov2023linear}
    Sokolov, I.M.: Linear response and fluctuation-dissipation relations for
    Brownian motion under resetting. Phys. Rev. Lett. 130(6), 067101 (2023)

    \bibitem{vinod2022time}
    Vinod, D., Cherstvy, A.G., Metzler, R., Sokolov, I.M.: Time-averaging and nonergodicity of reset geometric Brownian motion with drift. Phys. Rev. E. 106(3-1), 034137 (2022)
    
    \bibitem{vinod2022nonergodicity}
    Vinod, D., Cherstvy, A.G., Wang, W., Metzler, R., Sokolov, I.M.: Nonergodicity of reset geometric Brownian motion. Phys. Rev. E. 105(1), L012106 (2022)
    
    \bibitem{wang2021time}
    Wang, W., Cherstvy, A.G., Kantz, H., Metzler, R., Sokolov, I.M.: Time averaging and emerging nonergodicity upon resetting of fractional Brownian motion and heterogeneous diffusion processes. Phys. Rev. E. 104(2), 024105 (2021)
	
\end{thebibliography}

\end{document}